\newcommand{\version}{Ver.~0.0}
\newcommand{\setversion}[1]{\renewcommand{\version}{Ver.~{#1}}}
\title{Action of Hecke algebra on the double flag variety of type AIII}
\author{Lucas Fresse and Kyo Nishiyama}
\address{Universit\'e de Lorraine, CNRS, Institut \'Elie Cartan de Lorraine, UMR 7502, Vandoeu\-vre-l\`es-Nancy, F-54506, France}
\email{lucas.fresse@univ-lorraine.fr}
\address{Department of Mathematics, Aoyama Gakuin University, Fuchinobe 5-10-1, Chuo-ku, Sagamihara 252-5258, Japan}
\email{kyo@math.aoyama.ac.jp}
\thanks{K.~N.~is supported by JSPS KAKENHI Grant Number \#{21K03184}.}
\subjclass[2020]{Primary 20C08; Secondary 14M17, 14M15, 20G05}
\numberwithin{equation}{section}
\newtheorem{theorem}{Theorem}[section]
\newtheorem{lemma}[theorem]{Lemma}
\newtheorem{corollary}[theorem]{Corollary}
\theoremstyle{definition}
\newtheorem{example}[theorem]{Example}
\newcounter{penum}
\newenvironment{penumerate}{%
\begin{list}{$\;\;(\thepenum)$}{%
\usecounter{penum}
\setlength{\topsep}{0pt}
\setlength{\partopsep}{0pt}
\setlength{\parsep}{1ex}
\setlength{\itemindent}{0pt}
\setlength{\labelsep}{.5em}
\setlength{\labelwidth}{0pt}
\setlength{\leftmargin}{0pt}
\setlength{\rightmargin}{0pt}
\setlength{\itemsep}{0pt}
}}
{\end{list}}
\newcommand{\skipover}[1]{}
\newcommand{\Q}{\mathbb{Q}}
\newcommand{\bbG}{\mathbb{G}}
\newcommand{\C}{\mathbb{C}}
\newcommand{\bbF}{\mathbb{F}}
\newcommand{\bbP}{\mathbb{P}}
\newcommand{\Xfv}{\mathfrak{X}}
\newcommand{\Grass}{\mathrm{Gr}}
\newcommand{\GL}{\mathrm{GL}}
\newcommand{\PGL}{\mathrm{PGL}}
\newcommand{\diag}{\qopname\relax o{diag}}
\newcommand{\Ind}{\qopname\relax o{Ind}}
\newcommand{\trivial}{\mathbf{1}}
\newcommand{\Fun}{\qopname\relax o{Fun}}
\newcommand{\Stab}{\qopname\relax o{Stab}}
\newcommand{\Aut}{\qopname\relax o{Aut}}
\newcommand{\Vertices}{\mathcal{V}}
\newcommand{\lengthof}[1]{\ell(#1)}
\newcommand{\leftaction}{\mathrel{\raisebox{.4em}[0pt][0pt]{$\curvearrowright$}}}
\newcommand{\rightaction}{\,\raisebox{.4em}[0pt][0pt]{$\curvearrowleft$}\,}
\newcommand{\means}{{}\;\;$ \rightsquigarrow $\;\;{}}
\renewcommand{\Im}{\qopname\relax o{Im}}
\newcommand{\wgraph}[1]{\Gamma({#1})}
\newcommand{\Graphs}{\mathcal{G}}
\newcommand{\Heckeof}[1]{\Hecke({#1})}
\newcommand{\Hecke}{\mathscr{H}}
\newcommand{\vectwo}[2]{{\renewcommand{\arraystretch}{.85}\Bigl(\begin{array}{@{\,}c@{\,}}{#1}\\ {#2}\end{array}\Bigr)}}
\newcommand{\Flags}{\mathscr{F}\!\ell}
\newcommand{\lie}[1]{\mathfrak{#1}}
\newcommand{\wfrac}[2]{\dfrac{\,{#1}\,}{\,{#2}\,}}
\newcommand{\eb}{\boldsymbol{e}}
\newcommand{\scH}{\mathscr{H}}
\newcommand{\nopicture}[1]{}
\newcommand{\card}[1]{\# #1}
\newcommand{\rank}{\qopname\relax o{rank}}
\newcommand{\Mat}{\mathrm{M}}
\newcommand{\flag}{\mathcal{F}}
\newcommand{\conormal}{\mathcal{Y}}
\newcommand{\Xorbit}{\mathbb{O}}
\newcommand{\permutationsof}[1]{\mathfrak{S}_{#1}}
\newcommand{\symmetricgrpof}[1]{\mathfrak{S}_{#1}}
\newcommand{\ppermutations}{\mathfrak{T}}
\newcommand{\parameters}{\overline{\ppermutations}}
\newcommand{\q}{\boldsymbol{q}}
\newcommand{\ee}{\varepsilon}
\newcommand{\graphA}{\textcolor{black}{\mbox{\tiny $\begin{picture}(24,24)(0,-9)
\put(0,10){$\bullet$}\put(20,10){$\bullet$}
\put(0,-10){$\bullet$}\put(20,-10){$\bullet$}
\put(2,12){\line(1,-1){20}}
\put(22,12){\line(-1,-1){20}}
\end{picture}$}}}
\newcommand{\graphC}{\textcolor{black}{\mbox{\tiny $\begin{picture}(24,24)(0,-9)
\put(0,10){$\bullet$}\put(20,10){$\bullet$}
\put(0,-10){$\bullet$}\put(20,-10){$\bullet$}
\put(2,12){\line(0,-1){20}}
\put(22,12){\line(0,-1){20}}
\end{picture}$}}}
\newcommand{\graphB}{\textcolor{black}{\mbox{\tiny $\begin{picture}(24,24)(0,-9)
\put(0,10){$\bullet$}\put(20,10){$\bullet$}
\put(0,-10){$\bullet$}\put(20,-10){$\bullet$}
\put(2,12){\line(1,-1){20}}
\put(22,12){\circle{8}}
\end{picture}$}}}
\newcommand{\graphD}{\textcolor{black}{\mbox{\tiny $\begin{picture}(24,24)(0,-9)
\put(0,10){$\bullet$}\put(20,10){$\bullet$}
\put(0,-10){$\bullet$}\put(20,-10){$\bullet$}
\put(22,12){\line(-1,-1){20}}
\put(22,-8){\circle{8}}
\end{picture}$}}}
\newcommand{\graphE}{\textcolor{black}{\mbox{\tiny $\begin{picture}(24,24)(0,-9)
\put(0,10){$\bullet$}\put(20,10){$\bullet$}
\put(0,-10){$\bullet$}\put(20,-10){$\bullet$}
\put(2,12){\line(0,-1){20}}
\put(22,12){\circle{8}}
\end{picture}$}}}
\newcommand{\graphF}{\textcolor{black}{\mbox{\tiny $\begin{picture}(24,24)(0,-9)
\put(0,10){$\bullet$}\put(20,10){$\bullet$}
\put(0,-10){$\bullet$}\put(20,-10){$\bullet$}
\put(22,12){\line(0,-1){20}}
\put(2,12){\circle{8}}
\end{picture}$}}}
\newcommand{\graphG}{\textcolor{black}{\mbox{\tiny $\begin{picture}(24,24)(0,-9)
\put(0,10){$\bullet$}\put(20,10){$\bullet$}
\put(0,-10){$\bullet$}\put(20,-10){$\bullet$}
\put(22,12){\circle{8}}
\put(22,-8){\circle{8}}
\end{picture}$}}}
\newcommand{\graphH}{\textcolor{black}{\mbox{\tiny $\begin{picture}(24,24)(0,-9)
\put(0,10){$\bullet$}\put(20,10){$\bullet$}
\put(0,-10){$\bullet$}\put(20,-10){$\bullet$}
\put(22,12){\line(0,-1){20}}
\put(2,-8){\circle{8}}
\end{picture}$}}}
\newcommand{\graphI}{\textcolor{black}{\mbox{\tiny $\begin{picture}(24,24)(0,-9)
\put(0,10){$\bullet$}\put(20,10){$\bullet$}
\put(0,-10){$\bullet$}\put(20,-10){$\bullet$}
\put(2,12){\line(0,-1){20}}
\put(22,-8){\circle{8}}
\end{picture}$}}}
\newcommand{\graphJ}{\textcolor{black}{\mbox{\tiny $\begin{picture}(24,24)(0,-9)
\put(0,10){$\bullet$}\put(20,10){$\bullet$}
\put(0,-10){$\bullet$}\put(20,-10){$\bullet$}
\put(22,12){\line(-1,-1){20}}
\put(2,12){\circle{8}}
\end{picture}$}}}
\newcommand{\graphK}{\textcolor{black}{\mbox{\tiny $\begin{picture}(24,24)(0,-9)
\put(0,10){$\bullet$}\put(20,10){$\bullet$}
\put(0,-10){$\bullet$}\put(20,-10){$\bullet$}
\put(22,12){\circle{8}}
\put(2,-8){\circle{8}}
\end{picture}$}}}
\newcommand{\graphL}{\textcolor{black}{\mbox{\tiny $\begin{picture}(24,24)(0,-9)
\put(0,10){$\bullet$}\put(20,10){$\bullet$}
\put(0,-10){$\bullet$}\put(20,-10){$\bullet$}
\put(2,12){\circle{8}}
\put(22,-8){\circle{8}}
\end{picture}$}}}
\newcommand{\graphM}{\textcolor{black}{\mbox{\tiny $\begin{picture}(24,24)(0,-9)
\put(0,10){$\bullet$}\put(20,10){$\bullet$}
\put(0,-10){$\bullet$}\put(20,-10){$\bullet$}
\put(2,12){\line(1,-1){20}}
\put(2,-8){\circle{8}}
\end{picture}$}}}
\newcommand{\graphN}{\textcolor{black}{\mbox{\tiny $\begin{picture}(24,24)(0,-9)
\put(0,10){$\bullet$}\put(20,10){$\bullet$}
\put(0,-10){$\bullet$}\put(20,-10){$\bullet$}
\put(2,12){\circle{8}}
\put(22,12){\circle{8}}
\end{picture}$}}}
\newcommand{\graphO}{\textcolor{black}{\mbox{\tiny $\begin{picture}(24,24)(0,-9)
\put(0,10){$\bullet$}\put(20,10){$\bullet$}
\put(0,-10){$\bullet$}\put(20,-10){$\bullet$}
\put(2,12){\circle{8}}
\put(2,-8){\circle{8}}
\end{picture}$}}}
\newcommand{\graphP}{\textcolor{black}{\mbox{\tiny $\begin{picture}(24,24)(0,-9)
\put(0,10){$\bullet$}\put(20,10){$\bullet$}
\put(0,-10){$\bullet$}\put(20,-10){$\bullet$}
\put(2,-8){\circle{8}}
\put(22,-8){\circle{8}}
\end{picture}$}}}
\newcommand{\graphQ}{\textcolor{black}{\mbox{\tiny $\begin{picture}(24,24)(0,-9)
\put(0,10){$\bullet$}
\put(0,-10){$\bullet$}
\put(2,12){\line(0,-1){20}}
\end{picture}$}}}
\begin{document}

\begin{abstract}
Consider a connected reductive algebraic group $ G $ and a symmetric subgroup $ K $.  
Let $ \Xfv = K/B_K \times G/P $ be a double flag variety of finite type, 
where $ B_K $ is a Borel subgroup of $ K $, and $ P $ a parabolic subgroup of $ G $.  
A general argument shows that 
the orbit space $ \C\,\Xfv/K $ inherits a natural action of the Hecke algebra 
$ \Hecke = \Hecke(K, B_K) $ of double cosets via convolutions.  
However, it is a quite different problem to find out the explicit structure of the Hecke module.  

In this paper, for the double flag variety of type AIII, 
we determine the explicit action of $ \Hecke $ on $ \C\,\Xfv/K $ 
in a combinatorial way using graphs.    
As a by-product, we also get the description of the representation of the Weyl group on $ \C\,\Xfv/K $ 
as a direct sum of induced representations.
\end{abstract}

\maketitle

\setcounter{tocdepth}{1}

\tableofcontents

\section{Double flag varieties and Hecke algebra actions}

Let $ G $ be a connected reductive algebraic group 
with an involutive automorphism $ \theta $.  
We denote by $ K = G^{\theta} $ the subgroup of fixed points of $ \theta $ in $ G $.  
We assume $ K $ is connected for simplicity.  
Note that this assumption holds if $ G $ is simply connected.  

Let us consider 
a \emph{double flag variety} 
$ \Xfv = K/B_K \times G/P $, 
where $ B_K $ is a Borel subgroup of $ K $ and 
$ P $ is a parabolic subgroup of $ G $.  
We assume $ \Xfv $ is of finite type, i.e., there are finitely many orbits with respect to the 
diagonal $ K $ action on $ \Xfv $ (see \cite{NO.2011} and \cite{HNOO.2013}).  
Since 
$ \Xfv / K \simeq B_K \backslash G/ P $, 
this is equivalent to saying that there are finitely many $ B_K $-orbits on the partial flag variety $ G/P $, or
in other words, the natural action of $ K $ on $ G/P $ is spherical.  

Let us denote the Hecke algebra of $ (K, B_K) $ by 
$ \Hecke = \Heckeof{K, B_K} $.  
Then there exists a general recipe to define an action of $ \Hecke $ 
on the space of $ K $-orbits $ \C\,\Xfv/K $ in the double flag variety $ \Xfv $ by using the convolution product 
and the following double fibration maps (see \cite{Chriss.Ginzburg.1997}, for example).
\begin{eqnarray*}
\xymatrix @R-.3ex @M+.5ex @C-3ex @L+.5ex @H+1ex {
 & \ar[ld]_{p_{12}} \makebox[3ex][c]{$K/B_K {\times} K/B_K {\times} G/P$} \ar[rd]^{p_{23}} &
\\
\makebox[3ex][r]{$K/B_K {\times} K/B_K$}  & & \makebox[3ex][l]{$K/B_K {\times} G/P = \Xfv $}
}
\end{eqnarray*}
In this diagram, $ K $ acts diagonally, and all the maps respect the $ K $ action.  

However, in practice we prefer a simpler picture with the left $ B_K $ action:
\begin{eqnarray*}
\xymatrix @R-.7ex @M+.5ex @C-3ex @L+.5ex @H+1ex {
 & \ar[ld]_{p_{1}} \makebox[3ex][c]{$K \times_{B_K} G/P$} \ar[rd]^{p_{2}} &
\\
\makebox[3ex][r]{$K/B_K$}  & & \makebox[3ex][l]{$ G/P = X $}
}
\end{eqnarray*}
More generally, 
if $ X $ is a spherical $ K $-variety, 
Hecke algebra actions are considered by 
Mars-Springer \cite{Mars.Springer.1998} and Knop \cite{Knop.1997}.

Thus there exists an action of the Hecke algebra on 
the orbit space of $ \Xfv/ K \simeq B_K \backslash G / P $ so that 
the orbit space $ \C\,\Xfv/K $ is a Hecke module.  
However, there is no definite way to determine this module structure, and it seems difficult to describe the module structure even for a given explicit double flag variety.

Here in this paper, we will describe the explicit and concrete module structure of the Hecke algebra $ \scH $ 
for the case of the double flag variety of type AIII.  
The action is very explicit in terms of certain graphs, which represent $ K $-orbits.  
See Theorem \ref{thm:Hecke.algebra.action.generators}, which is the main theorem of this paper.  
From this theorem, 
we can also deduce the precise module structure of $ \C\,\Xfv/K $ 
as a representation of the Weyl group $ W_K $ of $ K $, 
which is isomorphic to $ \symmetricgrpof{p} \times \symmetricgrpof{q} $ in our situation.  
The representation is described in terms of a sum of induced representations.  
See Theorem \ref{thm:Weyl.group.representation}. 

To state the results in detail, let us first explain what is our double flag variety, and 
the structure of the orbit space.

\section{Double flag variety of type AIII}

In this section, the base field will be any field of characteristic other than $ 2 $.  
Later, we will consider the double flag varieties over finite fields.  

From now on, we concentrate on the case of the symmetric space of type AIII.  
\begin{itemize}
\item
$ G = \GL_n $ denotes the general linear group of order $ n $.  
\item
$ K = \GL_p \times \GL_q $ is a symmetric subgroup diagonally embedded into $ G $, where $ p + q = n $.  
\item
$ P = P_{(r, n - r)} $ denotes a standard maximal parabolic subgroup in $ G $ consisting of blockwise upper-triangular matrices with 
2 diagonal blocks of size $ r $ and $ n - r$.  
\item
$ B_K = B_p \times B_q $ is a Borel subgroup in $ K $, 
where $ B_p $ denotes the subgroup of $ \GL_p $ consisting of upper-triangular matrices.  
\end{itemize}
Thus we have 
\begin{equation*}
\begin{aligned}[t]
\Xfv &= K/B_K \times G/ P = \Bigl( \GL_p/B_p \times \GL_q/B_q \Bigr) \times \GL_n/P_{(r, n - r)} 
\\
&\simeq \Bigl( \Flags(V^+)\times \Flags(V^-)  \Bigr) \times \Grass_r(V),
\end{aligned}
\end{equation*}
where 
\begin{itemize}
\item $ V $ is an $ n $-dimensional vector space with a polar decomposition 
$V=V^+\oplus V^-$ and $ \dim V^+ = p$, $\dim V^- = q $.  
\item $\Grass_r(V)$ is the \emph{Grassmannian} of $r$-dimensional subspaces of $V$, and 
\item $\Flags(V^{\pm})$ denote the \emph{complete flag varieties} of $V^{\pm}$.  
\end{itemize}

It is not difficult to see 

\begin{lemma}
$ \# \Xfv / K < \infty $, i.e., $ \Xfv $ is of {finite type}.  
\end{lemma}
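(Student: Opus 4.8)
The plan is to use the equivalence already recorded in the excerpt: since $\Xfv/K \simeq B_K\backslash G/P$ and $G/P \simeq \Grass_r(V)$, the finite-type condition $\#\Xfv/K < \infty$ is the same as the assertion that $B_K = B_p\times B_q$ has only finitely many orbits on the Grassmannian $\Grass_r(V)$, i.e.\ that the $K$-action on $\Grass_r(V)$ is spherical. Thus I would reduce the whole statement to this single finiteness claim and work entirely on $\Grass_r(V)$.

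To prove the finiteness directly, I would pass to a matrix model. Represent a point $W \in \Grass_r(V)$ by an $n\times r$ matrix $A$ of rank $r$ whose columns span $W$, split into blocks $A = \vectwo{A^+}{A^-}$ with $A^+$ of size $p\times r$ and $A^-$ of size $q\times r$ according to $V=V^+\oplus V^-$. Changing the spanning basis of $W$ is right multiplication by $\GL_r$, while $B_K$ acts on the left, block-diagonally, by upper-triangular row operations inside the top $p$ rows and inside the bottom $q$ rows separately. Hence $B_K\backslash\Grass_r(V)$ is the set of orbits for the pair of actions (left $B_p\times B_q$, right $\GL_r$) on rank-$r$ matrices. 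I would then run a double-echelon reduction: use the right $\GL_r$-action to put $A$ into reduced column-echelon form, and then use the block-wise upper-triangular row operations to clear the surviving entries within each block, reducing $A$ to a $0/1$ matrix determined by the positions of its pivots in the two blocks together with a bounded linking pattern. Since this data ranges over a finite set, only finitely many normal forms occur, giving finitely many orbits.

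An equivalent, and perhaps cleaner, bookkeeping is via the relative-position invariant $r_{ij}(W) = \dim\bigl(W\cap(F_i^+\oplus F_j^-)\bigr)$ for $0\le i\le p$ and $0\le j\le q$, where $F_\bullet^{\pm}$ are the standard flags stabilized by $B_K$. These integers are manifestly $B_K$-invariant and satisfy $0\le r_{ij}\le\min(r,i+j)$, so they take only finitely many values; the proof is then completed by showing that the matrix $(r_{ij}(W))_{i,j}$ is a \emph{complete} invariant, i.e.\ that it determines the $B_K$-orbit of $W$ — which is exactly what the echelon reduction above makes explicit. It is convenient here that $T_K=T_p\times T_q$ is the full diagonal torus $T_G$ of $\GL_n$, so that in each echelon stratum the torus already scales the free coordinates while the block-unipotent part $U_p\times U_q$ removes the remainder.

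The step I expect to be the main obstacle is precisely this completeness — equivalently, checking that the double-echelon reduction leaves no continuous parameters, so that each value of $(r_{ij})$ supports a single orbit and no positive-dimensional moduli of $B_K$-orbits appear. This must be done with some care, because $B_K\subset B_G$ is a \emph{proper} subgroup; consequently the finiteness does not follow formally from the finiteness of the Bruhat decomposition $B_G\backslash\GL_n/P$, and one genuinely uses the two-block structure together with the maximality of $P$ (a single cut $r$). Alternatively, I would simply invoke the classification of finite-type double flag varieties for symmetric pairs in \cite{HNOO.2013} and \cite{NO.2011}, in which the pair $(\GL_n,\GL_p\times\GL_q)$ together with a maximal parabolic $P$ occurs, yielding the finiteness at once.
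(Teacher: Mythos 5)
Your proposal is correct and takes essentially the same route as the paper: the paper justifies this lemma via the parametrization of $B_K$-orbits on $\Grass_r(V)$ by the finite set $\parameters$ of full-rank pairs of partial permutations (Theorem 2.2 of \cite{FN.arxive2021}, recalled in \S 3.1), which is exactly the $0/1$ normal form your double-echelon reduction produces. Your fallback of invoking the finite-type classification for symmetric pairs is likewise the paper's own reference to \cite{HNOO.2013} and \cite{NO.2011}.
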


For general double flag varieties of finite type, we refer the readers to \cite{HNOO.2013}.

Write $ X = \Grass_r(V) \simeq G/P_{(r, n - r)} $, 
then $ K $ acts on $ X $ spherically, i.e., 
the action $ B_K \leftaction X $ has finitely many orbits.

\begin{lemma}
There is a natural bijection 
\begin{equation*}
\xymatrix @R -4.5ex @C -6ex @L 5pt
{ 
& \Xfv / K \ar[rr]^{\simeq} & \qquad \qquad & X / B_K
\\
& \text{\rotatebox[origin=c]{-90}{$\ni$}} &  & \text{\rotatebox[origin=c]{-90}{$\ni$}}
\\
& K \cdot ([\tau], \flag_0^+, \flag_0^-) \ar@{|->}[rr] & & B_K \cdot [\tau]}
\end{equation*}
where $ [\tau] \in \Grass_r(V) $, and $ \flag_0^{\pm} $ denote the standard flags of $ V^\pm $ stabilized by $ B_p $ and $ B_q $, respectively.  
\end{lemma}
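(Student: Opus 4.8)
The plan is to recognize this as an instance of a completely general and formal fact: whenever a group $K$ acts diagonally on a product $K/H \times X$, with $X$ an arbitrary $K$-variety, the $K$-orbits on $K/H\times X$ are in natural bijection with the $H$-orbits on $X$, via $K\cdot(eH,x)\mapsto H\cdot x$. Here I apply this with $H=B_K$ and $X=\Grass_r(V)$. The one geometric input I need is that, under the identification $K/B_K\simeq \Flags(V^+)\times\Flags(V^-)$, the base coset $eB_K$ corresponds to the pair of standard flags $(\flag_0^+,\flag_0^-)$, and that $\Stab_K\bigl((\flag_0^+,\flag_0^-)\bigr)=B_K$; this is immediate from $B_K=B_p\times B_q$.

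First I would check that the assignment is surjective and that every $K$-orbit meets the slice $\Grass_r(V)\times\{(\flag_0^+,\flag_0^-)\}$. Given any point $([\tau],\flag^+,\flag^-)\in\Xfv$, the transitivity of the $K$-action on $K/B_K$ produces a $k\in K$ with $k\cdot(\flag^+,\flag^-)=(\flag_0^+,\flag_0^-)$; applying $k$ moves the point into the slice without leaving its $K$-orbit. Hence every class in $\Xfv/K$ is represented by some $([\tau],\flag_0^+,\flag_0^-)$, and the map $K\cdot([\tau],\flag_0^+,\flag_0^-)\mapsto B_K\cdot[\tau]$ hits every $B_K$-orbit in $X$.

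Next I would establish well-definedness and injectivity simultaneously. Suppose two slice representatives $([\tau],\flag_0^+,\flag_0^-)$ and $([\tau'],\flag_0^+,\flag_0^-)$ lie in one $K$-orbit, so there is $k\in K$ fixing $(\flag_0^+,\flag_0^-)$ and sending $[\tau]$ to $[\tau']$. Fixing the standard flag pair forces $k\in\Stab_K\bigl((\flag_0^+,\flag_0^-)\bigr)=B_K$, whence $[\tau']\in B_K\cdot[\tau]$; conversely any $b\in B_K$ fixes $(\flag_0^+,\flag_0^-)$, so $[\tau']=b\cdot[\tau]$ puts the two slice points in the same $K$-orbit. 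This shows the map descends to a bijection $\Xfv/K\xrightarrow{\simeq}X/B_K$, consistent with the isomorphism $\Xfv/K\simeq B_K\backslash G/P$ recorded earlier.

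I do not expect any serious obstacle: the argument is purely formal, resting only on the transitivity of $K$ on $K/B_K$ and on the computation of the stabilizer of the base point. The single place that deserves care is the identification $\Stab_K\bigl((\flag_0^+,\flag_0^-)\bigr)=B_K$, which is what pins down $B_K$ (rather than some conjugate) as the group governing the orbits on $X$.
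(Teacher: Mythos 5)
Your proof is correct and complete: the paper states this lemma without proof, treating it as the standard fact that $K$-orbits on $K/B_K \times X$ correspond to $B_K$-orbits on $X$, and your slice-and-stabilizer argument (transitivity of $K$ on $K/B_K$ plus $\Stab_K\bigl((\flag_0^+,\flag_0^-)\bigr)=B_K$) is exactly the standard justification the paper implicitly relies on. No gaps.
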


In the following, {we will often identify} $ \Xfv / K $ and $ X / B_K $ via the above explicit bijection.

\section{Description of $ K $-orbits on $ \Xfv $}\label{section:description.Korbits}

Here we summarize the structure of the double flag variety and $ K $-orbits on it 
from our previous works.
For details we refer the readers to  
\cite{Fresse.N.2016,Fresse.N.2020,FN.arxive2021,2020ContempMath.FN}.

\subsection{Partial permutations}\label{section-2.1}

A \emph{partial permutation} of size $ p \times r $ is a matrix 
$ \tau_1 \in \Mat_{p, r} $ 
with entries in $ \{0,1\} $, in which the number of $ 1 $'s is less than or equal to $ 1 $ 
for any row and any column.  
(If $p=r$, we recover the set of partial permutation matrices considered in \cite{Fresse.N.2020}.)
Let us denote by $ \ppermutations_{p,r} $ the set of all the partial permutations in $ \Mat_{p, r} $.  
Put 
\begin{equation*}
\ppermutations=\ppermutations_{(p,q),r} 
:= \left\{ \tau=\begin{pmatrix} \tau_1 \\ \tau_2 \end{pmatrix} \in \ppermutations_{p,r} \times \ppermutations_{q,r} \Bigm| 
\rank \tau = r \right\} \subset \Mat_{p + q, r} , 
\end{equation*}
which is the set of pairs of partial permutations arranged vertically which are of {full rank}.  
Note that the symmetric group $ \permutationsof{r} $ of order $ r $ acts on this set from the right: 
$ \ppermutations \rightaction \permutationsof{r} $,
and we denote by 
$\parameters=\ppermutations/\permutationsof{r}$
the {quotient} by the symmetric group action.  

Let $ [\tau] := \Im \tau \in \Grass_r(V) $ denote the $ r $-dimensional subspace generated by the column vectors of $ \tau $.  

\begin{theorem}[{\cite[Theorem 2.2]{FN.arxive2021}}]
The map $ \ppermutations \ni \tau \mapsto [\tau] \in \Grass_r(V) $ 
factors through to a bijection
\begin{equation*}
\parameters=\ppermutations/ \permutationsof{r} \xrightarrow{\;\;\simeq\;\;}  X / B_K \simeq \Xfv / K 
\end{equation*}
so that we get the parametrization of the $ K $-orbits in the double flag variety 
$ \Xfv / K \simeq \parameters $.  
\end{theorem}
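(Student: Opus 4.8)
The plan is to read the statement as a double-coset decomposition. Writing an $r$-dimensional subspace $U\in\Grass_r(V)$ as the column space $\Im M$ of a full-rank matrix $M\in\Mat_{n,r}$ identifies $\Grass_r(V)$ with $\{M\in\Mat_{n,r}:\rank M=r\}/\GL_r$, where $\GL_r$ acts by column operations; since $B_K$ acts on the left through $b\cdot\Im M=\Im(bM)$, we get $X/B_K\simeq B_K\backslash\{M:\rank M=r\}/\GL_r$, and the theorem says that $\ppermutations$ meets every double coset exactly once, up to the residual right action of $\permutationsof{r}\subset\GL_r$. That the map factors through $\parameters$ is immediate, since permuting the columns of $\tau$ changes neither $\Im\tau$ nor the orbit $B_K\cdot[\tau]$. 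It remains to prove injectivity and surjectivity, which I would treat separately.

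For injectivity I would use the discrete invariants attached to the bifiltration preserved by $B_K$. Fix bases $e_1,\dots,e_p$ and $f_1,\dots,f_q$ adapted to $\flag_0^+$ and $\flag_0^-$, put $V^+_i=\Span{e_1,\dots,e_i}$, $V^-_j=\Span{f_1,\dots,f_j}$ and $W_{i,j}=V^+_i\oplus V^-_j$, and set
\[
r_{i,j}(U)=\dim\bigl(U\cap W_{i,j}\bigr),\qquad 0\le i\le p,\ 0\le j\le q .
\]
Each $W_{i,j}$ is $B_K$-stable, so $r_{i,j}$ is constant on $B_K$-orbits. For $U=[\tau]$ the columns of $\tau$ form a basis of $U$ made of vectors of the shape $e_a+f_c$, with the top or the bottom summand possibly absent; assigning to such a column the grid point $(a,c)\in\{0,\dots,p\}\times\{0,\dots,q\}$, where an absent summand is recorded as $0$, one checks that $r_{i,j}([\tau])$ equals the number of columns whose point lies in $[0,i]\times[0,j]$. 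Thus $r_{i,j}([\tau])$ is the cumulative counting function of the configuration of points, and a two-dimensional finite difference (inclusion--exclusion) recovers the multiplicity of each point, hence the whole multiset of columns. Because the partial-permutation constraint forbids repeated points, this multiset is exactly the class of $\tau$ in $\parameters$. Therefore $B_K\cdot[\tau]=B_K\cdot[\tau']$ forces equal invariants, hence $\tau\equiv\tau'$ in $\parameters$, which proves injectivity.

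For surjectivity I would produce a normal form by a doubly-echelon reduction. Given $U$, pick any basis and form $M$; I may act on the right by $\GL_r$ (column operations) and on the left by $B_K$, that is by row operations internal to each block that respect the order of the flags (adding a higher-indexed row to a lower-indexed one, and rescaling). Taking in each block the leading, i.e.\ highest-index, nonzero entry of each basis vector as a pivot, column operations clear repeated pivots, and the admissible row operations together with further column operations clear the non-pivot entries; iterating between the two blocks turns $M$ into a $\{0,1\}$-matrix in which every column is $e_a+f_c$, a stub $e_a$, or a stub $f_c$. The resulting $\tau$ has at most one $1$ in each row of its top and of its bottom block and is of rank $r$, hence lies in $\ppermutations$ with $U\in B_K\cdot[\tau]$.

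The main obstacle is this surjectivity step: one must guarantee that the reduction can always be arranged so that the output is a \emph{genuine} partial permutation, i.e.\ that the leading positions can be made distinct simultaneously in both blocks. This is exactly where the full-rank hypothesis $\rank\tau=r$ enters, and it is cleanly controlled through the invariants of the second paragraph. Indeed, $(U\cap W_{i,q})/(U\cap W_{i-1,q})$ embeds into the one-dimensional $W_{i,q}/W_{i-1,q}$, so the partial row-sums of the finite differences never exceed $1$ (and likewise the column-sums), while the identity $W_{i-1,j}+W_{i,j-1}=W_{i,j}$ gives the submodular inequality $r_{i-1,j}(U)+r_{i,j-1}(U)-r_{i-1,j-1}(U)\le r_{i,j}(U)$, making each finite difference nonnegative. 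Hence every $U$ already satisfies the combinatorial constraints defining $\ppermutations$, which is precisely what forces the doubly-echelon reduction to terminate in an element of $\ppermutations$ and completes the proof.
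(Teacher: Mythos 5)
Your factorization step and your injectivity argument are correct: the spaces $W_{i,j}=V^+_i\oplus V^-_j$ are $B_K$-stable, and because the columns of a partial permutation have pairwise disjoint supports, $r_{i,j}([\tau])$ is exactly the counting function of the grid points of $\tau$, whose two-dimensional finite differences recover the class of $\tau$ in $\parameters$ (these invariants are precisely the rank matrices $R(\tau)$ of \S\ref{S3.3}). Note that the paper itself does not prove this theorem---it quotes it from \cite[Theorem 2.2]{FN.arxive2021}---so your proof has to stand on its own; and it is the surjectivity half that does not.

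The gap is exactly at the point you call the main obstacle, and the patch you propose does not close it. The inequalities you derive ($d_{i,j}\geq 0$, row and column sums at most $1$) hold for \emph{every} $r$-dimensional subspace $U$; all they produce is a well-defined candidate $\tau_U\in\parameters$ whose invariants agree with those of $U$. They say nothing about the behavior of the doubly-echelon reduction, so the inference ``hence the reduction terminates in an element of $\ppermutations$'' is a non sequitur. Concretely, with $p=q=r=2$, the subspace $U=\Span{e_1+f_1,\,e_2+f_1}$ is a legitimate stopping point of your reduction (every column has the allowed shape $e_a+f_c$), it is \emph{not} spanned by a partial permutation, and its invariants of course satisfy all your inequalities (the candidate is $\tau_U=\{e_1+f_1,\,e_2\}$). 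To pass from ``$U$ has the same invariants as $[\tau_U]$'' to ``$U\in B_K\cdot[\tau_U]$'' you would need the functions $r_{i,j}$ to separate \emph{all} $B_K$-orbits on $\Grass_r(V)$, which is parts (2)--(3) of Theorem \ref{T1}, i.e.\ essentially the statement being proved: the argument is circular. What actually completes the elimination is a directed cross-block move that your list of operations omits: once block $1$ is normalized (mixed columns $e_{a_i}+w_i$ with $w_i\in V^-$ and pivot rows $a_i$ cleared elsewhere), the composite operation ``add $\lambda$ times column $i'$ to column $i$, then repair block $1$ by adding $-\lambda$ times row $a_i$ to row $a_{i'}$'' lies in $B_K\times\GL_r$ precisely when $a_{i'}<a_i$, and its net effect is $w_i\mapsto w_i+\lambda w_{i'}$; in the example, taking $\lambda=-1$ turns $U$ into $\Span{e_1+f_1,\,e_2}$. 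With this one-directional rule (always clear using the column whose block-$1$ pivot is smaller), surjectivity reduces to the standard Bruhat-type fact that orbits of a pair of Borel subgroups acting by row and column operations on rectangular matrices are represented by partial permutation matrices---that elimination argument (or, alternatively, a Goursat-type argument on the subquotients of $U$ in $V^{\pm}$ combined with the usual Bruhat decomposition) is the missing content of your proof.
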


If there is no confusion, we will identify  
a matrix $ \tau \in \ppermutations $ with  
its representative in $ \parameters $.  
Thus $ \tau $ often represents a $ K $-orbit in $ \Xfv $.  
Note also that the Weyl group 
$ W_K = \symmetricgrpof{p} \times \symmetricgrpof{q} $ acts on $ \parameters $ on the left in a natural way.

\subsection{Graphs}

There exists a convenient presentation of $\tau\in\parameters$ by using graphs.  
Let us explain it.

For 
$\tau\in\parameters$, 
we consider a {graph} $\wgraph{\tau}$ determined by the following rule.
\begin{itemize}
\item
It has two kinds of \emph{vertices}: ``positive'' vertices $ \Vertices^+_p = \{ 1^+,\ldots,p^+ \} $ and 
``negative'' vertices $ \Vertices^-_q = \{ 1^-,\ldots,q^- \} $, 
both being displayed along two horizontal lines. 
\item
Draw \emph{edges} between $i^+ \in \Vertices^+_p $ and $j^- \in \Vertices^-_q$ if $ \tau $ contains 
\emph{two} $ 1 $'s {in the same column}, at rows $ i^+ $ and $ j^- $.  
\item
There are marked vertices: mark the vertex $i^+$ (or $j^-$) if $\tau$ contains \emph{only one} $ 1 $ at 
row $i^+$ (or $j^-$) in a column.
\item
As a result we get 
$ \# (\text{edges}) + \# (\text{marked vertices}) = r $.
\end{itemize}

\begin{example}
To understand the graphs, let us give an example.
When $(p,q)=(5,3)$ and $r=4$, we get
\begin{equation}
\label{2.1}
\tau=\mbox{\tiny $\begin{pmatrix} 0 & 0 & 0 & 0 \\ 1 & 0 & 0 & 0 \\ 0 & 0 & 0 & 0 \\ 0 & 1 & 0 & 0 \\ 0 & 0 & 1 & 0 \\ \hline 0 & 1 & 0 & 0 \\ 0 & 0 & 0 & 1 \\ 1 & 0 & 0 & 0 \end{pmatrix}$}
\qquad
\mapsto
\qquad
\wgraph{\tau}=
\mbox{\tiny
$\begin{picture}(100,25)(0,0)
\put(0,11){$\bullet$}\put(20,11){$\bullet$}\put(40,11){$\bullet$}\put(60,11){$\bullet$}\put(80,11){$\bullet$}
\put(0,20){$1^+$}\put(20,20){$2^+$}\put(40,20){$3^+$}\put(60,20){$4^+$}\put(80,20){$5^+$}
\put(0,-11){$\bullet$}\put(20,-11){$\bullet$}\put(40,-11){$\bullet$}
\put(0,-22){$1^-$}\put(20,-22){$2^-$}\put(40,-22){$3^-$}
\put(21,13){\line(1,-1){21}}
\put(61,12){\line(-3,-1){60}}
\put(82,13){\circle{8}}
\put(22,-9){\circle{8}}
\end{picture}$}
\end{equation}
\end{example}


The set of graphs of this type will be denoted by 
$ \Graphs((p,q), r) = \{ \wgraph{\tau} \mid \tau\in\parameters \} $.  
The graphs are characterized by the properties listed above.
Note that 
every vertex is incident with at most one edge or mark, 
and that there is no edge joining two distinct vertices of the same sign.

We summarize the description of orbits using graphs into the following lemma.

\begin{lemma}
The graphs classify $ K $-orbits in $ \Xfv $.
\begin{equation*}
\xymatrix @R -4.5ex @C -6ex @L 5pt
{ 
\Xfv / K \simeq X/B_K \ar@{<-}[rr]^(.7){\simeq} & \qquad \qquad
& \parameters \ar[rr]^(.3){\simeq} & \qquad \qquad & \Graphs((p,q), r)
\\
 \text{\rotatebox[origin=c]{-90}{$\ni$}} &  & \text{\rotatebox[origin=c]{-90}{$\ni$}} &  & \text{\rotatebox[origin=c]{-90}{$\ni$}}
\\
B_K \cdot [\tau] \ar@{<-|}[rr] & & \tau \ar@{|->}[rr] & & \wgraph{\tau}}
\end{equation*}
%
\end{lemma}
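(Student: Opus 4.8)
The plan is to prove the only new content of the Lemma, namely that the assignment $\tau\mapsto\wgraph{\tau}$ induces a bijection $\parameters\xrightarrow{\;\simeq\;}\Graphs((p,q),r)$; the remaining isomorphisms $\Xfv/K\simeq X/B_K\xleftarrow{\;\simeq\;}\parameters$ displayed in the diagram are furnished by the earlier Lemma and by Theorem~2.2 of \cite{FN.arxive2021}. Since $\Graphs((p,q),r)$ is \emph{defined} as the image $\{\wgraph{\tau}\mid\tau\in\parameters\}$, surjectivity onto $\Graphs((p,q),r)$ is tautological, so the work is to check that $\wgraph{}$ is well defined on $\parameters$ and injective, and---if one wants the intrinsic characterization asserted just after the definition of the graphs---that every abstract graph with the listed incidence properties is actually realized by some $\tau$.

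First I would check well-definedness. The right action of $\permutationsof{r}$ permutes the columns of $\tau$, whereas $\wgraph{\tau}$ only records, column by column, whether that column carries two $1$'s (producing an edge between the corresponding positive and negative vertices) or a single $1$ (producing a mark on the corresponding vertex). This datum is a multiset indexed by the columns, hence invariant under reordering them, so $\wgraph{}$ factors through $\parameters=\ppermutations/\permutationsof{r}$. The same viewpoint makes the defining properties of a graph transparent: the partial-permutation condition on the \emph{rows} of $\tau_1$ (resp.\ $\tau_2$) means each positive (resp.\ negative) vertex occupies at most one column, hence is incident to at most one edge or mark; the partial-permutation condition on the \emph{columns} forces each column to have at most one $1$ in each block, so every column is an edge, a mark, or zero; and $\rank\tau=r$ forbids zero columns, giving $\#(\text{edges})+\#(\text{marks})=r$.

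For injectivity I would reverse the recipe. An edge $i^+\!-\!j^-$ determines the column vector $e_{i^+}+e_{j^-}$, a positive mark $i^+$ the column $e_{i^+}$, and a negative mark $j^-$ the column $e_{j^-}$; thus the graph prescribes the entire multiset of columns of $\tau$, and two representatives sharing the same graph differ only by a permutation of columns, i.e.\ define the same class in $\parameters$. Conversely, to see that every graph with the listed properties occurs, I would assemble the columns above in any order into a matrix $\tau$. The incidence condition yields the partial-permutation property in both blocks, and the only point needing a short argument is $\rank\tau=r$: because the positive endpoints used are pairwise distinct and likewise the negative ones, for any vanishing linear combination of the columns, reading off the coordinate indexed by a row that is actually occupied isolates the unique column using that row and forces its coefficient to vanish; hence the $r$ columns are linearly independent and $\tau\in\parameters$.

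I do not expect a genuine obstacle here, since the content is essentially the translation between two equivalent bookkeeping devices. The single step that is more than pure bookkeeping is the equivalence between the algebraic condition $\rank\tau=r$ and the combinatorial conditions (no zero column, together with the vertex-incidence condition guaranteeing linear independence), and even this reduces to the one-line coordinate argument just described.
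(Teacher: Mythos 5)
Your proposal is correct and follows the same route the paper takes implicitly: the paper gives this lemma no proof at all, presenting it as a summary whose substance is the cited parametrization $\parameters \simeq \Xfv/K$ (Theorem 2.2 of \cite{FN.arxive2021}) together with the evident column-by-column dictionary between partial permutations and graphs. Your writeup simply makes explicit the bookkeeping the paper leaves to the reader---well-definedness under the $\permutationsof{r}$-action, injectivity by reconstructing the multiset of columns, and the equivalence of $\rank\tau=r$ with the combinatorial conditions---and all of these steps are sound.
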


\subsection{Orbital invariants: $a^{\pm}(\tau)$, $b(\tau)$, $c(\tau)$ and $R(\tau)=(r_{i,j}(\tau))$}\;\;
\label{S3.3}

For the graph $\wgraph{\tau}$ we define:
\begin{itemize}
\item 
We set the \emph{degree} of vertices as $ \deg i^{\pm} := 0, 1, 2 $, 
depending on whether it is not incident with an edge nor marked, 
the end point of an edge, or marked, respectively.
\item
$a^{\pm}(\tau) := \# \{ (i^{\pm},j^{\pm}) \mid i<j \text{ and  } \deg(i^{\pm})<\deg(j^{\pm}) \} $

\item
$b(\tau) := \# \{ \text{edges} \} $

\item
$c(\tau) := \# \{ \text{\emph{crossings} of edges} \} $, 
i.e., the number of pairs of edges $(i^+,j^-)$ and $(k^+,\ell^-)$ such that $i<k$ and $j>\ell$.

\item 
$r_{i,j}(\tau) := \# (\text{edges}) + \# (\text{marked vertices}) $ 
with vertices among $\{1^+,\ldots,i^+\}\times\{1^-,\ldots,j^-\}$.  
\item $R(\tau) := \left( r_{i,j}(\tau) \right)_{0\leq i\leq p,\,0\leq j\leq q} \in \Mat_{p+1, q+1} $ : the ``\emph{rank matrix}''.  
\end{itemize}

We need $a^\pm(\tau)$, $b(\tau)$, $c(\tau)$ to give a dimension formula for the $K$-orbits in $\Xfv$ below, 
while the matrices $R(\tau)$ are to be used to describe the closure relations of orbits.

We also define a decomposition 
\begin{equation*}
\Vertices^+_p = \{1,\ldots,p\}=I\sqcup L\sqcup L',
\end{equation*}
where $I$ (resp. $L$, resp. $L'$) denotes the set of elements $i\in\{1,\ldots,p\}$ such that $i^+$ is a vertex of $\wgraph{\tau}$ of degree $1$
(resp. $2$, resp. $0$).  

A decomposition 
\begin{equation*}
\Vertices^-_q = \{1,\ldots,q\}=J\sqcup M\sqcup M'
\end{equation*}
is defined similarly.  
Namely, 
$J$ (resp. $M$, resp. $M'$) consists of the elements $j$ such that $j^-$ has degree $1$ (resp. $2$, resp. $0$)

Let $\sigma:J\to I$ be the \emph{bijection} defined by $\sigma(j)=i$ if $(i^+,j^-)$ is an edge in $\wgraph{\tau}$.

Note that $\tau$ is characterized by the subsets $I$, $L$, $L'$, $J$, $M$, $M'$ and the bijection $\sigma:J\to I$.
Also note that we have $b(\tau)=\card{I}=\card{J}$, and $c(\tau)$ is the number of inversions of $\sigma$.

\begin{example}\label{E2.1}\;\;
Let $ \eb_i^{\pm} $ be a standard basis vector of $ V^{\pm} $.  
For $\tau$ as in (\ref{2.1}), the associated graph is given as
\begin{gather*}
\tau = \vectwo{\tau_1}{\tau_2} = 
\left( \begin{array}{cccc} 
\eb_2^+ & \eb_4^+ & \eb_5^+ & 0 \\ \hline
\eb_3^- & \eb_1^- & 0 & \eb_2^- 
\end{array} \right)
\text{\means }
\wgraph{\tau}=
\mbox{\tiny
$\begin{picture}(100,25)(0,0)
\put(0,11){$\bullet$}\put(20,11){$\bullet$}\put(40,11){$\bullet$}\put(60,11){$\bullet$}\put(80,11){$\bullet$}
\put(0,20){$1^+$}\put(20,20){$2^+$}\put(40,20){$3^+$}\put(60,20){$4^+$}\put(80,20){$5^+$}
\put(0,-11){$\bullet$}\put(20,-11){$\bullet$}\put(40,-11){$\bullet$}
\put(0,-22){$1^-$}\put(20,-22){$2^-$}\put(40,-22){$3^-$}
\put(21,13){\line(1,-1){21}}
\put(61,12){\line(-3,-1){60}}
\put(82,13){\circle{8}}
\put(22,-9){\circle{8}}
\end{picture}$}
\qquad \text{ then}
\\
\begin{aligned}
 & a^+(\tau)=7,\quad a^-(\tau)=1,\quad b(\tau)=2,\quad c(\tau)=1, \quad
R(\tau)=\mbox{\tiny $\begin{pmatrix} 0 & 0 & 1 & 1 \\ 0 & 0 & 1 & 1 \\ 0 & 0 & 1 & 2 \\ 0 & 0 & 1 & 2 \\ 0 & 1 & 2 & 3 \\ 1 & 2 & 3 & 4 \end{pmatrix}$} \\
& 
\begin{array}{llll}
I=\{2,4\}, & L=\{5\}, & L'=\{1,3\},\\[2mm]
J=\{1,3\}, & M=\{2\}, & M'=\emptyset,
\quad\sigma=\begin{pmatrix} 1 & 3 \\ 4 & 2 \end{pmatrix}\in\mathrm{Bij}(J,I)
\end{array} \\
&
[\tau]=\langle \eb_2^++\eb_3^-,\eb_4^++\eb_1^-,\eb_5^+,\eb_2^-\rangle.
\end{aligned}
\end{gather*}
\end{example}

\subsection{Dimensions and closure relations of orbits}\label{section-2.2}

\vskip 1ex

Recall the base point $([\tau],\flag_0^+,\flag_0^-)$ in $\Xfv=\Grass_r(V)\times\Flags(V^+)\times\Flags(V^-)$.

\begin{theorem}[{\cite[Theorem 2.2]{FN.arxive2021}}]\label{T1}
Denote a $ K $-orbit in $\Xfv$ by 
$\Xorbit_\tau:=K\cdot([\tau],\flag_0^+,\flag_0^-)$.  
%
\begin{enumerate}
\item $\dim\Xorbit_\tau=\wfrac{p(p-1)}{2}+\wfrac{q(q-1)}{2}+a^+(\tau)+a^-(\tau)+\wfrac{b(\tau)(b(\tau)+1)}{2}+c(\tau)$.

\item 
$
\begin{aligned}[t]
\Xorbit_\tau = \{ (W,\flag^+, 
\flag^-)
\mid 
&
\dim W\cap(\flag_i^+ + \flag_j^-)=r_{i,j}(\tau) \;\; 
\\
&
\qquad\qquad
\text{ for any }  (i,j)\in\{0,\ldots,p\}\times\{0,\ldots,q\} \} .
\end{aligned}
$

\item $\overline{\Xorbit_\tau}\subset\overline{\Xorbit_{\tau'}} 
\iff r_{i,j}(\tau)\geq r_{i,j}(\tau') \;\; \text{ for any }  (i,j)\in\{0,\ldots,p\}\times\{0,\ldots,q\}$.
\end{enumerate}
\end{theorem}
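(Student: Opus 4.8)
The plan is to reduce all three assertions to the study of $B_K$-orbits on the Grassmannian $X=\Grass_r(V)$, using the identification $\Xfv/K\simeq X/B_K$ and the parametrization $\parameters\xrightarrow{\simeq}X/B_K$ already recorded in the excerpt. I would treat the statements in the order (2), (1), (3), since the rank description in (2) is what drives both the dimension count and the closure order.

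For (2), I would first observe the easy inclusion. The function $\dim W\cap(\flag_i^++\flag_j^-)$ is invariant under the diagonal action of $K=\GL(V^+)\times\GL(V^-)$, since each $k=(k^+,k^-)$ is invertible and preserves $V^\pm$. Evaluating at the base point $([\tau],\flag_0^+,\flag_0^-)$ gives exactly $r_{i,j}(\tau)$: indeed $\flag_{0,i}^++\flag_{0,j}^-=\langle \eb_1^+,\dots,\eb_i^+,\eb_1^-,\dots,\eb_j^-\rangle$, and the columns of $\tau$ lying in this span are precisely the edges $(a^+,b^-)$ with $a\le i,\ b\le j$ together with the marks $a^+$ ($a\le i$) and $b^-$ ($b\le j$), whose total is $r_{i,j}(\tau)$. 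Hence $\Xorbit_\tau$ lies in the asserted rank locus. For the converse I would invoke the cited bijection $\parameters\simeq X/B_K$: since the rank matrix is a $B_K$-invariant, it suffices to prove that $\tau\mapsto R(\tau)$ is injective on $\parameters$, whence each rank locus is a single orbit. Injectivity follows by reconstructing $\wgraph{\tau}$ from $R(\tau)$ — the second difference $r_{i,j}-r_{i-1,j}-r_{i,j-1}+r_{i-1,j-1}$ is the indicator of an edge $(i^+,j^-)$, while the boundary differences $r_{i,0}-r_{i-1,0}$ and $r_{0,j}-r_{0,j-1}$ detect the marks on the $+$ and $-$ sides — so distinct orbits carry distinct rank matrices.

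For (1), I would project $\Xorbit_\tau$ onto $\Flags(V^+)\times\Flags(V^-)$. This map is $K$-equivariant, and $K$ acts transitively on the base, so it is a homogeneous fibration whose fiber over $(\flag_0^+,\flag_0^-)$ is exactly $B_K\cdot[\tau]\subseteq X$; therefore $\dim\Xorbit_\tau=\binom{p}{2}+\binom{q}{2}+\dim(B_K\cdot[\tau])$, which supplies the first two terms. It remains to compute $\dim(B_K\cdot[\tau])$ as the rank of the tangent map $\lie{b}_K\to T_{[\tau]}\Grass_r(V)=\Hom([\tau],V/[\tau])$, $X\mapsto(w\mapsto Xw \bmod [\tau])$, equivalently $\dim\lie{b}_K-\dim\{X\in\lie{b}_K:X[\tau]\subseteq[\tau]\}$. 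Splitting $\lie{b}_K=\lie{b}_p\oplus\lie{b}_q$ and using the basis of $[\tau]$ read off from the graph, this is a bookkeeping computation in which the reorderings of vertex degrees contribute $a^+(\tau)+a^-(\tau)$, the edges contribute $\frac{b(b+1)}{2}$, and the inversions of $\sigma$ contribute $c(\tau)$; I would display the matching of terms and leave the routine entry-by-entry count implicit.

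For (3), the direction $\Rightarrow$ is immediate from upper semicontinuity of intersection dimension: the set $Z_{\tau'}=\{(W,\flag^+,\flag^-):\dim W\cap(\flag_i^++\flag_j^-)\ge r_{i,j}(\tau')\ \forall\,(i,j)\}$ is closed and, by (2), contains $\Xorbit_{\tau'}$, so $\overline{\Xorbit_{\tau'}}\subseteq Z_{\tau'}$; hence $\Xorbit_\tau\subseteq\overline{\Xorbit_{\tau'}}$ forces $r_{i,j}(\tau)\ge r_{i,j}(\tau')$. The substantive direction is $\Leftarrow$, which amounts to proving $Z_{\tau'}=\overline{\Xorbit_{\tau'}}$: by (2) the closed, $K$-stable set $Z_{\tau'}$ is the union of those orbits $\Xorbit_\tau$ with $r(\tau)\ge r(\tau')$, and I must show that every such orbit actually lies in $\overline{\Xorbit_{\tau'}}$. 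I expect this to be the main obstacle. I would attack it either by showing $Z_{\tau'}$ is irreducible with $\Xorbit_{\tau'}$ as its (necessarily unique, by the monotonicity of $\dim\Xorbit_\tau$ from (1)) dense orbit, realizing $Z_{\tau'}$ as the image of a manifestly irreducible incidence/resolution variety, so that $Z_{\tau'}$ is automatically the orbit closure; or, more combinatorially, by reducing to covering relations in the rank order and exhibiting, for each elementary graph move, an explicit one-parameter family inside $\Xorbit_{\tau'}$ that degenerates into $\Xorbit_\tau$, concluding by transitivity of the closure relation.
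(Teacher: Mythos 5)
First, a caveat: this paper never proves Theorem~\ref{T1} at all; the statement is quoted wholesale from \cite[Theorem 2.2]{FN.arxive2021} (the same source that supplies the parametrization $\parameters\simeq\Xfv/K$), so your attempt has to be judged on its own merits rather than against an in-paper argument. Much of it does hold up. Your proof of (2) is essentially complete: the rank function is $K$-invariant, its value at the base point is $r_{i,j}(\tau)$ (this uses, implicitly, that the columns of $\tau$ have pairwise disjoint supports, so that $[\tau]$ meets a coordinate subspace exactly in the span of the columns it contains), and injectivity of $\tau\mapsto R(\tau)$ on $\parameters$ via second differences correctly identifies each rank locus with a single orbit, given that the $\Xorbit_{\tau'}$ partition $\Xfv$. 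The implication $\Rightarrow$ in (3) by semicontinuity is likewise fine. For (1), the fibration over $\Flags(V^+)\times\Flags(V^-)$ and the reduction to computing $\dim\lie{b}_K-\dim\{X\in\lie{b}_K \mid X[\tau]\subseteq[\tau]\}$ is the right move, but the identification of this codimension with $a^+(\tau)+a^-(\tau)+b(\tau)(b(\tau)+1)/2+c(\tau)$ \emph{is} the entire content of (1), and you defer it as routine bookkeeping instead of carrying it out; that is an omission, though a repairable one (the linear conditions characterizing $\lie{k}^{\tau}$ written out in the Appendix, \S\ref{appendix:calculation.stabilzer}, are exactly what such a count needs).

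The genuine gap is the implication $\Leftarrow$ of (3), which you yourself flag as the main obstacle and then leave as a pair of untested plans; both plans conceal the real difficulty. In the irreducibility plan you invoke ``monotonicity of $\dim\Xorbit_\tau$ from (1)'', but (1) gives no such thing: it is not evident from the formula that $R(\tau)\geq R(\tau')$ with $\tau\neq\tau'$ forces $\dim\Xorbit_\tau<\dim\Xorbit_{\tau'}$, and this comparison is itself a nontrivial combinatorial assertion requiring proof; moreover, irreducibility of $Z_{\tau'}$ is essentially equivalent to the statement being proven, so constructing the ``manifestly irreducible incidence/resolution variety'' is where all the work would sit, and no such variety is produced. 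In the degeneration plan you would need, first, the list of covers of the rank order together with the fact that any relation $R(\tau)\geq R(\tau')$ is connected by a saturated chain of covers --- this is precisely the content of \cite[Theorem 2.3]{FN.arxive2021}, which the present paper mentions but does not reproduce --- and, second, an explicit one-parameter curve inside $\Xorbit_{\tau'}$ degenerating to $\Xorbit_\tau$ for each type of cover; neither ingredient is supplied. So as it stands the proposal establishes (2) and half of (3), sketches (1), and leaves the deepest claim of the theorem --- that the rank conditions cut out precisely the orbit closure --- as an unproven plan.
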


We can describe the cover relation of the closure of orbits, which is not presented here 
(see {\cite[Theorem 2.3]{FN.arxive2021}}).  
Taking this for granted, we have

\begin{corollary}
If $\Xorbit_{\tau'}$ covers $\Xorbit_\tau$ then $\dim\Xorbit_{\tau'}=\dim\Xorbit_\tau+1$ holds.
\end{corollary}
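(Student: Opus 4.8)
The plan is to derive the corollary directly from the two ingredients already at hand: the dimension formula of Theorem~\ref{T1}(1) and the explicit list of cover moves supplied by \cite[Theorem~2.3]{FN.arxive2021}, which we take for granted. The point is that the dimension is an explicit function of the orbital invariants, so checking that a cover raises it by exactly one becomes a finite, move-by-move computation on graphs.

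First I would isolate what actually varies. In
\[
\dim\Xorbit_\tau=\frac{p(p-1)}{2}+\frac{q(q-1)}{2}+a^+(\tau)+a^-(\tau)+\frac{b(\tau)(b(\tau)+1)}{2}+c(\tau)
\]
the first two summands depend only on $(p,q)$, hence are unaffected by any degeneration. Writing $\Delta f=f(\tau')-f(\tau)$ for a cover $\Xorbit_{\tau'}\succ\Xorbit_\tau$, the assertion is therefore equivalent to
\[
\Delta a^++\Delta a^-+\Delta\!\left(\frac{b(b+1)}{2}\right)+\Delta c=1.
\]
Next I would invoke \cite[Theorem~2.3]{FN.arxive2021}: every cover is realised by one of finitely many elementary modifications of $\wgraph{\tau}$, each touching only one or two vertices and the edges or marks incident to them (uncrossing a pair of crossing edges, swapping which vertex carries a mark versus an edge, or creating or deleting a mark). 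For each such move I would compute the four increments. Here $\Delta b\in\{-1,0,1\}$ is read off immediately, and because $\tfrac{b(b+1)}{2}$ is a triangular number, a change $b\mapsto b+1$ contributes the large jump $\Delta\!\left(\tfrac{b(b+1)}{2}\right)=b(\tau)+1$, which for the edge-creating moves must be cancelled by a compensating drop in $a^\pm$ and $c$; for the pure uncrossing moves one has $\Delta b=0$ and the single unit comes from $\Delta c=\pm1$.

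The genuine obstacle is the computation of $\Delta a^\pm$, because $a^\pm(\tau)$ is global: it counts all pairs $(i^{\pm},j^{\pm})$ with $i<j$ and $\deg(i^{\pm})<\deg(j^{\pm})$, so altering the degree of a single vertex a priori affects its comparison with every same-sign vertex. The key will be to exploit the minimality built into a cover: the side conditions in \cite[Theorem~2.3]{FN.arxive2021} force the vertices involved in the move to be suitably adjacent, or forbid an intervening vertex of intermediate degree, precisely so that this a priori global change in $a^\pm$ collapses to a local count independent of the far-away part of the graph. Once that reduction is made, the remaining verification is a short finite check over all move types, each yielding total increment $+1$. I expect no difficulty beyond this bookkeeping: strict monotonicity $\dim\Xorbit_\tau<\dim\Xorbit_{\tau'}$ is automatic from the proper inclusion of the irreducible closures $\overline{\Xorbit_\tau}\subsetneq\overline{\Xorbit_{\tau'}}$, so all the content lies in the upper bound $\Delta\dim\le1$, which is exactly the minimality encoded in the cover moves.
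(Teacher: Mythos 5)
Your proposal takes essentially the same route as the paper, which in fact offers no independent proof: it simply takes the cover-move description of \cite[Theorem~2.3]{FN.arxive2021} for granted and treats the unit dimension jump as read off from the formula in Theorem~\ref{T1}(1), exactly as you outline. The move-by-move bookkeeping you describe (e.g.\ an edge-creating cover's jump of $b(\tau)+1$ in the triangular term being offset by drops in $a^{\pm}$ and $c$, or a pure uncrossing contributing $\Delta c=\pm 1$) is precisely the verification implicit in that citation, so there is nothing genuinely different to compare.
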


\begin{figure}[H]
\caption{Closure relations of $ K $-orbits for $ p = q = r = 2 $}\label{figure2}
\textcolor{blue}{\xymatrixcolsep{1pc}
\xymatrix{
\textcolor{darkgray}{\dim:6}
& & & & & \graphA \ar@/_/@{-}[lld] \ar@{-}[d] \ar@/^/@{-}[rrd]  & & & & & \\
\textcolor{darkgray}{5} & & & \graphB \ar@{-}[lld] \ar@{-}[d] \ar@{-}[rrd] & & \graphC \ar@/_/@{-}[lllld] \ar@{-}[lld] \ar@{-}[rrd] \ar@/^/@{-}[rrrrd]  & & \graphD \ar@{-}[lld] \ar@{-}[d] \ar@{-}[rrd] & & & \\
\textcolor{darkgray}{4} & \graphE \ar@{-}[rd]\ar@/_/@{-}[rrrd] & & \graphF \ar@{-}[ld]\ar@/_/@{-}[rrrd] & & \graphG \ar@/_/@{-}[ld]\ar@/^/@{-}[rd] & & \graphH \ar@{-}[rd]\ar@/^/@{-}[llld] & & \graphI \ar@{-}[ld]\ar@/^/@{-}[llld] & \\
\textcolor{darkgray}{3} & & \graphJ \ar@{-}[rd]\ar@/_/@{-}[rrrd] & & \graphK \ar@{-}[rd] & & \graphL \ar@{-}[ld] & & \graphM \ar@{-}[ld]\ar@/^/@{-}[llld] & & \\
\textcolor{darkgray}{2} & & & \graphN & & \graphO & & \graphP & & &}}
\end{figure}

\subsection{The number of orbits}\label{subsection:number.of.orbits}

Let $ (k, s, t) $ be nonnegative integers which satisfy 
\begin{gather*}
p \geq k + s, \quad q \geq k + t , \quad r = k+s+t .
\end{gather*}
Put $ s'=p-k-s\;  \text{ and  } \;  t'=q-k-t $.  
Consider the subgroup $ H_{k,s,t} \subset \permutationsof{p} {\times} \permutationsof{q}$ defined by 
\begin{eqnarray*}
H_{k,s,t} & = & \{(\text{\emph{$a_1$}},a_2,a_3; \text{\emph{$a_1 $}},b_2,b_3) 
\in (\permutationsof{k} {\times} \permutationsof{s} {\times}  \permutationsof{s'}) 
{\times} (\permutationsof{k} {\times} \permutationsof{t} {\times}  \permutationsof{t'}) \} \\
 & \cong & \Delta\permutationsof{k} {\times}  \permutationsof{s} {\times}  \permutationsof{s'} {\times}  \permutationsof{t} {\times}  \permutationsof{t'},
\end{eqnarray*}
where $\Delta\permutationsof{k} \subset \permutationsof{k}^2 $ stands for the diagonal subgroup.  

\begin{theorem}[{\cite[Corollary 2.13]{FN.arxive2021}}]\label{thm:number.orbits}
The total number of $K$-orbits in $\Xfv$ is given by
\[\#\Xfv/K=\sum_{(k,s,t)}\dim \Ind_{H_{k,s,t}}^{\permutationsof{p}\times\permutationsof{q}} \trivial
= \sum_{(k,s,t)}\binom{p}{k,s,s'}\binom{q}{k,t,t'}k!,\]
where the sums are running over triples $(k,s,t)$ 
as above.
\end{theorem}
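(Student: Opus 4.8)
The plan is to reduce the count to a purely combinatorial enumeration of graphs and then to reconcile it with the index of $H_{k,s,t}$. By the classification lemma of Section~\ref{section:description.Korbits} we have $\#\Xfv/K=\#\parameters=\#\Graphs((p,q),r)$, and by the observation in Section~\ref{S3.3} each graph $\wgraph{\tau}$ is determined uniquely by the data consisting of the ordered partition $\{1,\ldots,p\}=I\sqcup L\sqcup L'$, the ordered partition $\{1,\ldots,q\}=J\sqcup M\sqcup M'$, and the bijection $\sigma\colon J\to I$. So the first step is to make this correspondence between $\Graphs((p,q),r)$ and triples of such data explicit, and to read off the constraints: writing $k=|I|=|J|=b(\tau)$, $s=|L|$, $t=|M|$, $s'=|L'|$, $t'=|M'|$, the defining relation $\#(\text{edges})+\#(\text{marks})=r$ becomes $k+s+t=r$, while $s'=p-k-s\geq 0$ and $t'=q-k-t\geq 0$ give $p\geq k+s$ and $q\geq k+t$. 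Thus the triples $(k,s,t)$ that occur are exactly those indexing the sums in the statement.

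Next I would stratify $\Graphs((p,q),r)$ by the triple $(k,s,t)$ and count each stratum. For a fixed $(k,s,t)$, choosing $I,L,L'$ amounts to assigning each element of $\{1,\ldots,p\}$ to one of three labeled classes of sizes $k,s,s'$, giving $\binom{p}{k,s,s'}$ possibilities; choosing $J,M,M'$ gives $\binom{q}{k,t,t'}$; and choosing $\sigma$ gives $k!$. Multiplying and summing over $(k,s,t)$ yields the right-hand multinomial expression, which already proves the equality between the first and last members of the displayed formula.

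For the middle member I would interpret the strata representation-theoretically through the natural left action of $W_K=\permutationsof{p}\times\permutationsof{q}$ on $\parameters$. The action relabels positive and negative vertices separately, hence preserves the triple $(k,s,t)$, so each stratum is $W_K$-stable. I would then check that $W_K$ acts transitively on a fixed stratum: given two data sets of the same type, any bijection $w_-\colon J\to\tilde J$ can be completed to a matching pair $(w_+,w_-)$ by setting $w_+=\tilde\sigma\circ w_-\circ\sigma^{-1}$ on $I$ and choosing arbitrary bijections on the remaining blocks. Computing the stabilizer of a standard base graph, the permutations of $L'$, $L$, $M'$, $M$ are unconstrained, contributing $\permutationsof{s'}\times\permutationsof{s}\times\permutationsof{t'}\times\permutationsof{t}$, whereas on $I\leftrightarrow J$ the edge-preservation condition forces $w_+|_I=\sigma\,w_-|_J\,\sigma^{-1}$, contributing the diagonal $\Delta\permutationsof{k}$. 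This identifies the stabilizer with $H_{k,s,t}$, so the stratum is isomorphic to $W_K/H_{k,s,t}$ as a $W_K$-set, whence its cardinality equals $[W_K:H_{k,s,t}]=\dim\Ind_{H_{k,s,t}}^{W_K}\trivial$. A direct order computation $|W_K|/|H_{k,s,t}|=\frac{p!\,q!}{k!\,s!\,s'!\,t!\,t'!}=\binom{p}{k,s,s'}\binom{q}{k,t,t'}k!$ then reconciles all three expressions.

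I expect the only point needing genuine care to be the stabilizer computation, specifically verifying that the edge-preservation constraint couples $w_+|_I$ and $w_-|_J$ into exactly the diagonal $\Delta\permutationsof{k}$ rather than a full factor $\permutationsof{k}\times\permutationsof{k}$; the remaining steps are bookkeeping. The transitivity and stabilizer analysis is deliberately set up to also supply the $W_K$-module decomposition used in Theorem~\ref{thm:Weyl.group.representation}.
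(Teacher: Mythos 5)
Your proof is correct and follows essentially the same route the paper takes (and sketches around Theorem \ref{thm:Weyl.group.representation}): parametrize $K$-orbits by the graphs $\Graphs((p,q),r)$, stratify them by the invariants $(k,s,t)$, and identify each stratum as the transitive $W_K$-set $W_K/H_{k,s,t}$, so that its cardinality is simultaneously the multinomial expression and $\dim \Ind_{H_{k,s,t}}^{\permutationsof{p}\times\permutationsof{q}} \trivial$. Your explicit verification of transitivity and of the stabilizer being the diagonal $\Delta\permutationsof{k}\times\permutationsof{s}\times\permutationsof{s'}\times\permutationsof{t}\times\permutationsof{t'}$ is exactly the point the paper leaves implicit ("we can easily see"), so the proposal is a sound, self-contained version of the paper's argument.
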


\section{Setting over finite fields}\label{section:setting.finite.field}

Based on the classification of orbits, we will calculate the Hecke algebra action on the orbit space.  
For this, we follow the classical recipe of Iwahori \cite{Iwahori.1964}, 
and we will consider everything over the finite field $ \bbF = \bbF_{\q} $ of $ \q $-elements from now on.  
(The letter $ q $ is already used to denote the size of the second block for $ K $.  
But the number of elements of a finite field is customary denoted also by the letter ``$ q $''.  
To distinguish them, we will use $ \q $ instead of $ q $ for the finite field $ \bbF_{\q} $.)

Summary of the notation over the finite fields:
\begin{equation*}
\begin{array}{c|c|lcccccccccc}
G & \GL_n(\bbF) \\
K & \GL_p(\bbF) \times \GL_q(\bbF) & \text{a symmetric subgroup of $ G $} \\
B_K & B_p(\bbF) \times B_q(\bbF) & \text{a Borel subgroup of $ K $} \\
W_K & \permutationsof{p} \times \permutationsof{q} & \text{the Weyl group of $ K $}\\
G/P & \GL_n(\bbF)/P_{(r, n - r)}(\bbF) \simeq \Grass_r(\bbF^n) & \text{Grassmannian of $ r $-spaces in $ \bbF^n $} \\
B_K \backslash G/P & \parameters = \ppermutations_{(p, q; r)}/ \permutationsof{r} & \text{the space of partial permutations} 
\end{array}
\end{equation*}
%

\skipover{
For the sake of our reference, I will give the numbers of elements in $ K $ and $ B_K $.  
Actually, we really do not need them.

\begin{lemma}
If we put $ \q = \# \bbF $, 
\begin{equation*}
\# \GL_n = \q^{n (n - 1)/2} \prod_{k = 1}^n (\q^k - 1) , \qquad
\# B_n = (\q - 1)^n \q^{n (n - 1)/2}
\end{equation*}
\end{lemma}

\begin{proof}
This is well-known.  I will give an account only for my convenience.  

Note that $ \GL_n / P_{(1, n - 1)} \simeq \bbP^{n - 1} $ is a projective space of dimension $ n - 1 $, 
and it is easy to see that 
\begin{equation*}
\# \bbP^{n - 1} = \q^{n - 1} + \q^{n - 2} + \dots + \q + 1 = \wfrac{\q^n - 1}{\q - 1} =: \q^{[k]}
\end{equation*}
Denote $ a_n = \# \GL_n $.  
Since $ P_{(1, n - 1)} = (\GL_1 \times \GL_{n - 1}) \ltimes \bbF^{n - 1} $, 
we get 
\begin{equation*}
\# P_{(1, n - 1)} = \# \bbF^{\times} \cdot a_{n - 1} \cdot (\# \bbF)^{n - 1} 
= (\q - 1) \q^{n - 1} a_{n - 1} .
\end{equation*}
Thus we get 
\begin{equation*}
a_n = \# \bbP^{n - 1} \times \# P_{(1, n - 1)} 
= \wfrac{\q^n - 1}{\q - 1} \cdot (\q - 1) \q^{n - 1} a_{n - 1} 
= (\q^n - 1) \q^{n - 1} a_{n - 1}.
\end{equation*}
Hence the formula for $ \# \GL_n $.  
The formula for $ \# B_n $ is easy.
\end{proof}

From this lemma, it is easy to derive the formula for $ \# K $ and $ \# B_K $, which I won't give here.
}

In addition to this, we also use the following notation.  
\begin{itemize}
\item
$ s_i = (i, i+1) $: simple reflection (a transposition in $ W_K $), and 
\emph{$ T_i = T_{s_i} $} is the corresponding generator in the Hecke algebra 
$ \Hecke = \Hecke(K, B_K) $.

\item
Recall pairs of partial permutations 
$ \tau = \vectwo{\tau_1}{\tau_2} \in \parameters=\ppermutations/\permutationsof{r}$ 
of full rank $ r $.  
The matrix $ \tau $ is \emph{identified} with its image $ [\tau] \in X=\Grass_r(\mathbb{F}^{n}) $ 
(thus \emph{we often omit $ [ \;\; ] $} below).  

\item
Let $ \Xorbit_{\tau} = B_K\cdot\tau$ be a $B_K$-orbit in the Grassmannian $ X $.  
Then {$ \xi_{\tau} $} denotes {the characteristic function} of the orbit $ \Xorbit_{\tau} $.  
\end{itemize}

We are interested in the action of $ T_i $, 
$ T_i \ast \xi_{\tau} $ for $ \tau \in \ppermutations $.  
To calculate it, we recall some basic facts on the action of Hecke algebras.

\section{Hecke algebra of double cosets}\label{section:Hecke.alg.2.cosets}

In this section, we consider a general finite group 
and review some general properties of a Hecke algebra of double cosets.  
For that reason, we will denote by $ K $ a general finite group.  
This notation is effective only in this section, 
but there is no harm to consider it as the already defined $ K $ (over a finite field) above.  

Let us take a subgroup $ B \subset K $ (again $ B $ does not necessarily mean a Borel subgroup) 
and consider the convolution algebra of 
$ B $-spherical functions on $ K $.  
Note that these functions are $ \C $-valued functions.  
This algebra is called the Hecke algebra of double cosets and 
we denote it as $ \scH = \scH(K, B) $.  
Namely, 
\begin{equation*}
\scH(K, B) = \{ f : K \to \C \mid 
f(h_1 k h_2) = f(k) \text{ for } h_1, h_2 \in B \text{ and } k \in K \}
\end{equation*}
and the convolution product is defined by 
\begin{equation*}
a \ast b (x) = \int_K a(k) b(k^{-1} x) dk = \wfrac{1}{\# K} \sum_{k \in K} a(k) b(k^{-1} x)
\end{equation*}
where the integral $ \int_K dk $ is taken with respect to the normalized Haar measure of the finite group $ K $.  
As written above, the integral is just the pointwise sum divided by the whole volume $ \# K $, 
but we prefer the notation using integral $ \int_K $.  

Put $ W = B \backslash K / B = \{ w \} $,  
identified with the set of representatives in $ K $, which we pick and fix once and for all.  
Let us consider characteristic functions on the double cosets $ B w B \subset K $ so that they form a basis of $ \scH $.  
However, since we would like to get an identity element for the double coset $ B = B e B $, 
we will normalize the characteristic functions by $ \#K/\#B $.  
Thus we put 
\begin{equation*}
T_w = \wfrac{\# K}{\# B} \cdot \trivial_{B w B} 
\qquad
(w \in W).
\end{equation*}
Then 
$ \{ T_w \}_{w \in W} $ forms a basis of $ \scH $ over $ \C $.  

Let $ X $ be a finite set and 
assume $ K $ acts on $ X $ from the left.   
We consider 
the space of functions 
$ \Fun^B(X) $ on $ X $ which are $ B $-invariant.  
The Hecke algebra $ \scH $ acts on $ \Fun^B(X) $ via the convolution again:
\begin{equation*}
f \ast \xi (x) := \int_K f(k) \xi(k^{-1} x) dk 
\qquad
(f \in \scH, \;\; \xi \in \Fun^B(X)).
\end{equation*}

We denote by $ \ppermutations = X/B $ identified with the set of representatives in $ X $.

We denote by $ \xi_{\tau} $ the characteristic function on a $ B $-orbit 
$ B \tau \subset X $ so that 
$ \{ \xi_{\tau} \}_{\tau \in \ppermutations} $ is a basis of $ \Fun^B(X) $.  

Let us calculate the convolution:
\begin{align}
T_w \ast \xi_{\tau}(x) 
&= \int_K T_w(g) \xi_{\tau}(g^{-1} x) dg \notag \\
&= \wfrac{1}{\# K} \sum_{g \in K} T_w(g) \xi_{\tau}(g^{-1} x) \notag \\
&= \wfrac{1}{\# K} \sum_{g \in B w B} T_w(g) \xi_{\tau}(g^{-1} x).
\label{eq:fw.convoluted.with.xitau}
\end{align}
For this sum, 
only $ g \in B w B \cap a K_{\tau} B $ contributes, 
where $ a \in K $ is chosen as $ x = a \tau $
(we assume that $x\in K\tau$, otherwise the sum is zero), and $ K_{\tau} = \Stab_K(\tau) $ denotes the stabilizer of $ \tau \in X $.    
In fact, 
$ \xi_{\tau}(g^{-1} x) \neq 0 $ iff $ g^{-1} x \in B \tau $.  
Since $ x = a \tau $, we get 
\begin{equation*}
g^{-1} x = b \tau
\iff g^{-1} a \tau = b \tau
\iff \tau = a^{-1} g b \tau
\end{equation*}
which means $ a^{-1} g b \in K_{\tau} = \Stab_K(\tau) $.  
Thus we get $ g \in a K_{\tau} B $.

Since 
$ T_w \ast \xi_{\tau} \in \Fun^B(X) $, it is a linear combination of various 
$ \xi_{\tau_i} $'s for $ \tau_i \in \ppermutations $.  
From the last expression \eqref{eq:fw.convoluted.with.xitau}, 
it is easy to see that if $ g \in B w B $ contributes to the sum nontrivially then 
\begin{equation*}
g^{-1} x \in B \tau \iff x \in g B \tau \iff B x \subset B w B \tau.
\end{equation*}
Let us decompose 
\begin{equation}\label{eq:BwB.tau}
B w B \tau = \sqcup_{i = 1}^N B \tau_i
\end{equation}
Thus we only have to consider the cases $ x = \tau_i \; (1 \leq i \leq N) $.  
If we choose $ a_i $'s which satisfy $ \tau_i = a_i \tau $, then 
the above consideration tells us 

\begin{theorem}\label{thm:action.of.fw.generalsetting}
The Hecke operator $ T_w \;\; (w \in W) $ acts on $ \Fun^B(X) $ by 
\begin{equation*}
T_w \ast \xi_{\tau} = \sum_{i = 1}^N \wfrac{\# (a_i K_{\tau} B \cap B w B)}{\# B} \xi_{\tau_i} .
\end{equation*}
\end{theorem}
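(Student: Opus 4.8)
The plan is to read off the coefficients of $T_w \ast \xi_{\tau}$ in the basis $\{\xi_{\tau_i}\}$ by evaluating the $B$-invariant function $T_w \ast \xi_{\tau}$ at the orbit representatives. Since $\{\xi_\rho\}_{\rho \in \ppermutations}$ is a basis of $\Fun^B(X)$ and $\xi_{\tau_j}(\tau_i) = \delta_{ij}$, writing $T_w \ast \xi_{\tau} = \sum_{\rho} c_\rho\, \xi_{\rho}$ forces $c_\rho = (T_w \ast \xi_{\tau})(\rho)$. By the decomposition \eqref{eq:BwB.tau} and the observation preceding it (a contributing $x$ must satisfy $Bx \subset BwB\tau = \sqcup_i B\tau_i$), the only representatives with a chance of a nonzero coefficient are the $\tau_i$ appearing there. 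So the entire computation reduces to evaluating \eqref{eq:fw.convoluted.with.xitau} at $x = \tau_i = a_i\tau$.

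Next I would plug $x = \tau_i$ into \eqref{eq:fw.convoluted.with.xitau} and use that $T_w$ is the constant $\#K/\#B$ on the double coset $BwB$ (and zero elsewhere). The prefactor $1/\#K$ cancels against this constant, leaving
\[
c_{\tau_i} = (T_w \ast \xi_{\tau})(\tau_i) = \frac{1}{\#B}\sum_{g \in BwB} \xi_{\tau}(g^{-1}\tau_i).
\]
The remaining sum simply counts the elements $g \in BwB$ for which $\xi_{\tau}(g^{-1}\tau_i) = 1$.

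Finally I would invoke the equivalence already established in the lead-up to the statement: with $\tau_i = a_i\tau$, one has $\xi_{\tau}(g^{-1}\tau_i) = 1$ exactly when $g \in a_i K_{\tau} B$, where $K_{\tau} = \Stab_K(\tau)$. Hence the sum equals $\#(BwB \cap a_i K_{\tau} B)$, and dividing by $\#B$ gives the asserted coefficient $\#(a_i K_{\tau} B \cap BwB)/\#B$. The only point that needs care---and which I regard as the main (if modest) obstacle---is checking that this count is independent of the choice of $a_i$: if $a_i'\tau = a_i\tau$ then $a_i^{-1}a_i' \in K_{\tau}$, so $a_i' K_{\tau} B = a_i K_{\tau} B$ and the intersection with $BwB$ is unchanged. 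Thus the formula is well posed, and collecting the contributions over the $\tau_i$ from \eqref{eq:BwB.tau} yields the stated action of $T_w$.
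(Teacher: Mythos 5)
Your proof is correct and takes essentially the same approach as the paper: the paper's one-line proof simply points back to the discussion preceding the statement, which is precisely the computation you spell out --- evaluating the $B$-invariant function $T_w \ast \xi_{\tau}$ at the representatives $\tau_i$ of \eqref{eq:BwB.tau}, using the equivalence $\xi_{\tau}(g^{-1}\tau_i) \neq 0 \iff g \in a_i K_{\tau} B$ together with the normalization $T_w = (\# K/\# B)\,\trivial_{BwB}$. Your additional check that the coefficient is independent of the choice of $a_i$ is a small well-posedness point the paper leaves implicit.
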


\begin{proof}
This theorem follows from the discussion above.  
Note that $ T_w $ is normalized by the constant $ \# K / \# B $.
\end{proof}

We shall apply this formula to our situation.  
(It's still interesting to consider vector bundle case in general.  We postpone it as future study.)

\section{Double cosets multiplications}\label{section:2.cosets.multiplication}

Let us return to the setting of \S\ref{section:setting.finite.field}, 
however we make the assumption that the ground field $ \bbF $ is algebraically closed, of characteristic $\neq 2 $, 
which will take place only in this section.

Let $ s_i = (i, i+1) $ be a simple reflection (a transposition in $ W_K $), and 
put $ T_i = T_{s_i} $ be the corresponding element in the Hecke algebra.  
We are interested in $ T_i \ast \xi_{\tau} $ for $ \tau \in \ppermutations $.  
As in \S\ref{section:description.Korbits},  
$ \tau \in \ppermutations $ is often identified with a graph with two subsets of vertices 
$ \Vertices^+_p $ and $ \Vertices^-_q $ (of $ p $ and $ q $ elements respectively) which are equipped with 
several edges and marked vertices.  
Recall that $ W_K \simeq \symmetricgrpof{p} \times \symmetricgrpof{q} $ 
acts on $ \ppermutations $ by the matrix multiplication from the left, which descends to the action on 
$ \parameters $, the set of parameters of orbits.  
This action can be identified with the natural action of $ W_K $ on the graphs, induced by that on the vertices.  

The following key lemma corresponds to Equation \eqref{eq:BwB.tau} in the present situation.

\begin{lemma}\label{key-lemma:2coset.multiplication}
A double coset $ B_K s_i B_K $ generates at most two $ B_K $-orbits on the Grassmannian 
$ X = \Grass_r(\bbF^n) $.  Namely we have 
\begin{equation*}
B_K s_i B_K \cdot \tau = \begin{cases}
B_K s_i \tau = B_K \tau & \text{if $ s_i \tau = \tau $} 
\hfill \makebox[0pt][l]{\text{\upshape{}case (I)}}
\\
B_K s_i \tau \cup B_K \tau & \text{if $ s_i \tau \neq \tau $ and $ \tau $ is among $ (\ast) $}\qquad
\hfill \makebox[0pt][l]{\text{\upshape{}case (II)}}
\\
B_K s_i \tau & \text{if $ s_i \tau \neq \tau $ and $ \tau $ is among $ (\ast\ast) $}
\hfill \makebox[0pt][l]{\text{\upshape{}case (III)}}
\end{cases}
\end{equation*}
where 
$ (\ast) $ denotes the case of {\upshape{}(1), (3), (6), (8)} in Table \ref{figure:alpha.component.stabilizer.Borel} 
in Appendix {\upshape\S\ref{appendix:calculation.stabilzer}}, and 
$ (\ast\ast) $ denotes the case of {\upshape{}(2), (4), (5), (7)} (ibid.).
\end{lemma}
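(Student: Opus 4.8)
The plan is to reduce the assertion to a rank‑one computation and then run through the local configurations of the graph at the two swapped vertices.

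First I would replace the double coset by the minimal parabolic $P_i := B_K \sqcup B_K s_i B_K$ it generates, so that $B_K s_i B_K \cdot \tau \subseteq P_i\cdot\tau = P_i/\Stab_{P_i}([\tau])$, a single irreducible $P_i$-orbit which is a finite union of $B_K$-orbits. The reflection $s_i$ lies in one factor of $W_K = \symmetricgrpof{p}\times\symmetricgrpof{q}$, say in $\symmetricgrpof{p}$ (the case of $\symmetricgrpof{q}$ is symmetric after exchanging $V^+$ and $V^-$), whence $P_i = P_i^+\times B_q$ with $P_i^+\subset\GL_p$ the stabilizer of the standard flag of $V^+$ with its $i$-th step deleted. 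Thus $P_i$ fixes every standard flag step except $F^+_i$, which it moves over the $\bbP^1$ of lines in the quotient plane $F^+_{i+1}/F^+_{i-1} \cong \Span{\eb_i^+,\eb_{i+1}^+}$. Consequently the action of $P_i$ changes the incidence data of $[\tau]$ only at the vertices $i^+$ and $(i+1)^+$ of $\wgraph{\tau}$, and the only graphs that can occur along $P_i\cdot\tau$ are $\wgraph{\tau}$ itself and the one obtained by exchanging these two vertices, namely $\wgraph{s_i\tau}$.

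It follows that $P_i\cdot\tau$ meets at most the two orbits $\Xorbit_\tau$ and $\Xorbit_{s_i\tau}$, which proves the first assertion. Since $B_K s_i\cdot\tau = \Xorbit_{s_i\tau}\subseteq B_K s_i B_K\cdot\tau$ and $P_i\cdot\tau = \Xorbit_\tau\cup(B_K s_i B_K\cdot\tau)$, the whole statement reduces to two questions: whether $s_i\tau = \tau$ (case I), and, when $s_i\tau\neq\tau$, whether $\Xorbit_\tau$ is recaptured inside $B_K s_i B_K\cdot\tau$ (case II) or not (case III). To answer these I would enumerate the local shape of $\wgraph{\tau}$ at $\{i^+,(i+1)^+\}$ — the degrees $\deg i^+,\deg(i+1)^+\in\{0,1,2\}$ and, when both vertices are edge endpoints, the relative position of their partners — which is exactly the list of cases (1)--(8) in Table~\ref{figure:alpha.component.stabilizer.Borel}. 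Case I is precisely the situation where this local picture is invariant under the swap, so that $\wgraph{\tau} = \wgraph{s_i\tau}$ and $P_i\cdot\tau$ is a single orbit. In the remaining configurations I would translate the question into the rank-one picture: $B_K$-orbits in $P_i\cdot\tau$ correspond to orbits of $\Stab_{P_i}([\tau])$ on $B_K\backslash P_i\cong\bbP^1$, and $\Xorbit_\tau$ is recaptured precisely when the point of this $\bbP^1$ determined by $[\tau]$ is a generic point of the big cell $B_K s_i B_K$ rather than the special point reachable only from the identity coset. Feeding in the explicit stabilizer $\Stab_{B_K}([\tau])$ computed in the Appendix then assigns $(1),(3),(6),(8)$ to $(\ast)$ and $(2),(4),(5),(7)$ to $(\ast\ast)$.

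The genuine difficulty is this last dichotomy, case II versus case III, which is not formal: it depends on the image of $\Stab_{B_K}([\tau])$ in $\PGL_2 = \Aut(\bbP^1)$ acting on the lines of the plane $\Span{\eb_i^+,\eb_{i+1}^+}$, and on whether the line cut out by $[\tau]$ is moved off its position by that image. I would therefore carry out the stabilizer computations separately in the Appendix and quote them here, reducing the proof of the lemma to assembling the eight per-configuration verdicts.
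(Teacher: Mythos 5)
Your proposal is correct, and its skeleton coincides with the paper's: pass to the minimal parabolic $P_i=B_K\sqcup B_K s_i B_K$, reduce everything to the pair of orbits $\Xorbit_\tau$, $\Xorbit_{s_i\tau}$, and decide the dichotomy (II)/(III) from the stabilizer computations of Appendix \S\ref{appendix:calculation.stabilzer}. There is, however, one genuine divergence. The paper obtains the statement ``$P_i\cdot\tau=\Xorbit_\tau\cup\Xorbit_{s_i\tau}$, exactly two orbits in case (B)'' by quoting Knop's classification of minimal-parabolic actions on spherical varieties (types $G_0$ and $S\cdot U_0$ and the table in \cite{Knop.CMH1995}); you instead derive ``at most two orbits'' combinatorially: any $g\in P_i$ preserves every flag step except $F_i^+$, so the rank matrix $R(g\tau)$ can differ from $R(\tau)$ only in row $i$, and since orbits are characterized by their rank matrices (Theorem \ref{T1}(2),(3)), the only graphs that can occur are $\wgraph{\tau}$ and $\wgraph{s_i\tau}$. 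This buys self-containedness (no external input beyond the paper's own orbit classification), at the price that the final combinatorial step --- that the rank data off row $i$ pins down the local configuration at $\{i^+,(i+1)^+\}$ up to the swap --- is asserted rather than proved; it is true (the function $j\mapsto r_{i+1,j}-r_{i-1,j}$ recovers the number of marks on the two vertices and the set of negative partners of their edges, leaving only the two assignments), and a complete writeup should include this check. The second difference is mechanical: you test whether $\Xorbit_\tau$ is recaptured via $P_i^\tau\cap B_K s_i B_K\neq\emptyset$, i.e.\ whether the image of $\Stab_{P_i}([\tau])$ in $\PGL_2$ is \emph{not} upper triangular, reading this off the entries $\lie{b}_2^-$ versus $\lie{b}_2^+$ of Table \ref{figure:alpha.component.stabilizer.Borel} at $\tau$ itself; the paper instead writes $B_K s_i B_K\cdot\tau=B_K X_i^-\cdot s_i\tau$ and tests whether the negative root group $X_i^-$ stabilizes $s_i\tau$, i.e.\ applies the table to $s_i\tau$ with upper/lower switched. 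Your variant is, if anything, closer to the statement of the lemma, which labels the cases by the table entries at $\tau$.

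Two imprecisions should be repaired before this counts as a complete argument. First, at the end you invoke ``the explicit stabilizer $\Stab_{B_K}([\tau])$ computed in the Appendix''; the object you need, and the one the Appendix actually computes, is $\Stab_{P_i}([\tau])$ (more precisely the image of $\lie{p}_i^\tau$ in $\mathfrak{sl}_2$) --- the image of $\Stab_{B_K}([\tau])$ is always upper triangular and can decide nothing. Second, the sentence about ``a generic point of the big cell'' is garbled: the base point $B_K e\in B_K\backslash P_i\cong\bbP^1$ never lies in the big cell; the correct criterion is that its orbit under $\Stab_{P_i}([\tau])$ is not reduced to a point, equivalently meets the big cell, equivalently $P_i^\tau\not\subset B_K$. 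With these two corrections your argument goes through and yields exactly the assignment $(\ast)=(1),(3),(6),(8)$ and $(\ast\ast)=(2),(4),(5),(7)$.
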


\begin{proof}
Let $B_K\cdot\tau$ be a $B_K$-orbit of the Grassmannian $ X=\mathrm{Gr}_r(\mathbb{F}^n)$.
Let $i\in\{1,\ldots,p-1\}$ and let $P_i=B_K\sqcup B_K s_i B_K$ be the corresponding minimal parabolic subgroup ($s_i$ is the corresponding simple reflection).

In Appendix \S\ref{appendix:calculation.stabilzer}, 
we compute the isotropy subgroup $P_i^\tau:=\{g\in P_i:g\cdot\tau=\tau\}\subset P_i$. More precisely, let $U_i$ be the unipotent radical of $P_i$ and let $L_i$ be the standard Levi subgroup of $P_i$. The quotient $L_i/Z(L_i)$ is isomorphic to $\PGL_2(\mathbb{F})$. By considering the Levi decomposition $P_i=L_i\ltimes U_i$, we get a morphism of groups
\[\pi_i:P_i\to L_i\to L_i/Z(L_i)\cong \PGL_2(\mathbb{F})\]
and a morphism of Lie algebras
\[d\pi_i:\mathfrak{p}_i=\mathrm{Lie}(P_i)\to \mathfrak{sl}_2(\mathbb{F}).\]
In concrete terms, any element in $P_i$ (resp. $\mathfrak{p}_i$) is a blockwise upper triangular matrix with one block $X$ of size 2 and the other blocks of size 1, and the map $\pi_i$ (resp. $d\pi_i$) is obtained by considering the projection of $X$ to $\PGL_2(\bbF)$ (resp. $\mathfrak{sl}_2(\bbF)$).
In Appendix {\upshape\S\ref{appendix:calculation.stabilzer}},  
we have calculated the image of $P_i^\tau$ (in fact, of $\mathfrak{p}_i^\tau=\mathrm{Lie}(P_i^\tau)$) by $\pi_i$ (in fact, $d\pi_i$). 
The calculations show the following alternative:
\begin{itemize}
\item[\rm (A)] $s_i\tau=\tau$, in which case $d\pi_i(\mathfrak{p}_i^\tau)=\mathfrak{sl}_2(\bbF)$;
\item[\rm (B)] $s_i\tau\not=\tau$, in which case $d\pi_i(\mathfrak{p}_i^\tau)$ is a Borel subalgebra of $\mathfrak{sl}_2(\bbF)$.
\end{itemize}
One can be more precise. There are in fact three cases. Here we refer to $i,i+1$ as vertices in the graphic representation of $\tau$.
\begin{itemize}
\item[\rm (I)] If $i,i+1$ are both of degree 0 or both of degree 2, then we are in case (A).
\item[\rm (II)] If $\deg_\tau(i)<\deg_\tau(i+1)$ or $i,i+1$ are end points of two edges which have a crossing, then we are in case (B) and, moreover, $d\pi_i(\mathfrak{p}_i^\tau)$ is the subalgebra of \emph{lower} triangular matrices in $\mathfrak{sl}_2(\mathbb{F})$;
\item[\rm (III)] If $\deg_\tau(i)>\deg_\tau(i+1)$ or $i,i+1$ are end points of two edges which do not have a crossing, then we are in case (B) and, moreover, $d\pi_i(\mathfrak{p}_i^\tau)$ is the subalgebra of \emph{upper} triangular matrices in $\mathfrak{sl}_2(\mathbb{F})$.  
\end{itemize}
In the language of Knop's paper \cite[\S3]{Knop.CMH1995}:
\begin{itemize}
\item In case (A), $\Phi(P_i)$ is of type $G_0$;
\item In case (B), $\Phi(P_i)$ is of type $S\cdot U_0$.
\end{itemize}
Types $T_0$ and $N_0$ of 
\cite[\S3]{Knop.CMH1995}
do not appear in our situation.  
We can check this if we consider the type of the stabilizer and consider the claims just after 
\cite[Lemma 3.1]{Knop.CMH1995}.

In particular,
the information on isotropy subgroups/subalgebras can be used in combination with 
\cite[table on p.~295]{Knop.CMH1995} in order to determine $B_K s_i B_K\cdot \tau$.
First, we note that $P_i\cdot\tau$ always contains the orbits $B_K\cdot\tau$ and $B_K\cdot(s_i\tau)$, which can be the same. In case (B),  where $\Phi(P_i)$ is of type $S\cdot U_0$, we also know from \cite{Knop.CMH1995} that $P_i\cdot\tau$ contains exactly two orbits, namely
\[P_i\cdot\tau =B_K\cdot \tau\cup B_K\cdot s_i\tau.\] 
In this case, $B_K s_i B_K\cdot\tau$ contains at most two orbits, hence we have either
$B_K s_i B_K\cdot\tau=B_K\cdot s_i\tau$ or $B_K s_i B_K\cdot\tau=B_K\cdot\tau\cup B_K\cdot s_i\tau$.
It remains to determine in which case we have indeed two orbits.

Let $B_K=TU$ where $T$ is the standard maximal torus and $U\subset B_K$ is the unipotent radical.
Let $X_i^\pm:=\{u_i^\pm(t)\}_{t\in\mathbb{F}}$ be the one parameter subgroup of unipotent matrices attached to the root $\pm\alpha_i$. Thus
\[U=U_i X_i^+\quad\mbox{and}\quad s_i X_i^+ s_i^{-1}=X_i^-.\]
Hence
\[B_K s_i B_K=B_K s_i U_i X_i^+=B_K s_i X_i^+=B_K X_i^- s_i.\]
Whence
\[B_K s_i B_K\cdot\tau=B_K X_i^-\cdot s_i\tau.\]
\begin{itemize}
\item 
In case (I), we have $s_i\tau=\tau$ and $ X_i^- \in P_i^{\tau} $, 
hence $B_K s_i B_K\cdot\tau=B_K\cdot\tau$ in this case.
\item 
In case (II), we have that the projection $\pi_i(P_i^{s_i\tau})$ of the isotropy group of $s_i\tau$ consists of upper triangular matrices (since by applying $s_i$ to $\tau$, we switch the configuration of the vertices $i,i+1$). 
This means $X_i^-\not\subset P_i^{s_i\tau}$, and 
hence there exists $g\in X_i^-$ such that $g\cdot s_i\tau\not= s_i\tau$. 
We claim that $g\cdot s_i\tau\notin B_K\cdot s_i\tau$, 
so that $g\cdot s_i\tau\in B_K\cdot \tau$ and we must have $B_K s_i B_K\cdot \tau=B_K\cdot\tau\cup B_K\cdot s_i\tau$ (as asserted in the lemma) in this case.
Arguing by contradiction assume that $g\cdot s_i\tau=b \cdot s_i\tau$ with some $b\in B_K$. 
Then $g^{-1}b\in P_i^{s_i\tau}$, which implies that $\pi_i(g^{-1}b)$ must be upper triangular. But this is not the case, hence the claim is verified.
\item In case (III), the projection $\pi_i(P_i^{s_i\tau})$ of the isotropy group of $s_i\tau$ consists of lower triangular matrices. 
Hence $X_i^-\subset P_i^{s_i\tau}$. Whence $B_K s_i B_K\cdot \tau=B_KX_i^-\cdot s_i\tau=B_K\cdot s_i\tau$ in this case (as asserted in the lemma).
\end{itemize}
The proof of Lemma \ref{key-lemma:2coset.multiplication} is complete for $ i $ associated to $ \symmetricgrpof{p} $, 
i.e., $ 0 < i < p $.  
The case for $ s_i \in \symmetricgrpof{q} $ can be argued similarly.  
\end{proof}

\section{Explicit action of Hecke algebra on the double flag variety}

In this section, $ \bbF $ is a finite field of characteristic $\neq 2 $ again.  
As before we denote $ \q = \# \bbF $.
Note that Lemma \ref{key-lemma:2coset.multiplication} 
is still valid in this context (by considering fixed points of the Frobenius map).

Recall that the Hecke algebra $ \Hecke = \Hecke(K, B_K) $ acts on the space of
$ K $-orbits $ \C\,\Xfv/K $ and the action is given by 
the general theory of spherical functions discussed in \S\ref{section:Hecke.alg.2.cosets}.

According to the theory, 
by Theorem \ref{thm:action.of.fw.generalsetting} and Lemma \ref{key-lemma:2coset.multiplication}, 
we get 
\begin{equation*}
T_i \ast \xi_{\tau} = \alpha \xi_{\tau} + \beta \xi_{s_i \tau}
\end{equation*}
for some coefficients $ \alpha, \beta \in \Q $  (one of which might be zero).  
Let us determine them.

\subsection{Calculation of $ \alpha $}

Note that $ \alpha \neq 0 $ only if we are in Cases (I) or (II) in 
Lemma \ref{key-lemma:2coset.multiplication}.

To compute it, we use the formula in Theorem~\ref{thm:action.of.fw.generalsetting} with $ a_i = e $ (identity).  
The numerator becomes (before counting the number)
\begin{equation*}
K_{\tau} B_K \cap B_K s_i B_K, 
\quad
\text{ where } K_{\tau} = K \cap P_{[\tau]},  
\end{equation*}
and $ P_{[\tau]} = \Stab_G([\tau]) $ is the stabilizer of the $ r $-dimensional space 
$ [\tau] \in \Grass_r(\bbF^n) $ generated by the columns of $ \tau $.  
From a general argument, 
\begin{equation*}
B_K s_i B_K = X_i^+ s_i B_K 
\qquad 
\text{ with }
\qquad
X_i^+ = U_{\alpha_i} \simeq \bbF
\end{equation*}
where $ U_{\alpha_i} \subset B_K $ 
denotes the one parameter subgroup generated by a root vector $ x_{\alpha_i} $ corresponding to $ s_i = s_{\alpha_i} $.  

\begin{lemma}\label{lemma:KtauBK}
\begin{equation*}
K_{\tau} B_K \cap B_K s_i B_K 
= \{ u s_i b \in X_i^+ s_i B_K \mid s_i u^{-1} \tau \in B_K \tau \}.
\end{equation*}
The expression $ u s_i b $ is unique.
\end{lemma}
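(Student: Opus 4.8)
The plan is to combine the standard normal form for the double coset $ B_K s_i B_K $ with an elementary orbit criterion for membership in $ K_{\tau} B_K $. As recalled just above the statement, the Bruhat decomposition for the minimal parabolic $ P_i = B_K \sqcup B_K s_i B_K $ gives $ B_K s_i B_K = X_i^+ s_i B_K $, and the expression of an element as $ u s_i b $ with $ u \in X_i^+ $ and $ b \in B_K $ is \emph{unique}; this is exactly the uniqueness asserted in the lemma, so that part requires no further work. It then remains only to decide which elements $ u s_i b $ of this double coset actually lie in $ K_{\tau} B_K $.

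First I would establish the orbit criterion
\begin{equation*}
g \in K_{\tau} B_K \iff g^{-1}\cdot[\tau]\in B_K\cdot[\tau] \qquad (g \in K).
\end{equation*}
Indeed, since $ K_{\tau} = \Stab_K([\tau]) $, we have $ g \in K_{\tau} B_K $ iff $ g b^{-1} \in K_{\tau} $ for some $ b \in B_K $, iff $ g\cdot(b^{-1}\cdot[\tau]) = [\tau] $ for some $ b \in B_K $. As $ b $ runs over $ B_K $, the point $ b^{-1}\cdot[\tau] $ runs over the entire orbit $ B_K\cdot[\tau] $, so this says precisely $ [\tau]\in g\cdot B_K\cdot[\tau] $, that is, $ g^{-1}\cdot[\tau]\in B_K\cdot[\tau] $.

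Next I would substitute the normal form. Writing $ g = u s_i b $ and using $ s_i^{-1} = s_i $, one gets $ g^{-1} = b^{-1} s_i u^{-1} $, whence
\begin{equation*}
g^{-1}\cdot[\tau] = b^{-1}s_i u^{-1}\cdot[\tau].
\end{equation*}
Since $ b^{-1}\in B_K $ preserves the orbit $ B_K\cdot[\tau] $, the condition $ g^{-1}\cdot[\tau]\in B_K\cdot[\tau] $ is equivalent to $ s_i u^{-1}\cdot[\tau]\in B_K\cdot[\tau] $, i.e., to $ s_i u^{-1}\tau\in B_K\tau $ under the identification $ [\tau]\leftrightarrow\tau $. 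Combining this with the orbit criterion yields the asserted description of $ K_{\tau} B_K \cap B_K s_i B_K $.

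The computation is short, and the only genuine input is the normal form $ B_K s_i B_K = X_i^+ s_i B_K $ together with its uniqueness, which is already available. Consequently the one point to handle with care is purely bookkeeping: tracking the inverse $ g^{-1} = b^{-1}s_i u^{-1} $ and verifying that left multiplication by $ b^{-1}\in B_K $ can be absorbed into the orbit $ B_K\cdot[\tau] $ without altering the membership condition. I do not anticipate any serious obstacle beyond this.
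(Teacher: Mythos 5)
Your proof is correct and follows essentially the same route as the paper's own argument: both rest on the normal form $ B_K s_i B_K = X_i^+ s_i B_K $ and then translate $ u s_i b \in K_{\tau} B_K $ into $ s_i u^{-1} \tau \in B_K \tau $ by absorbing $ b $ and passing to inverses. Your version merely makes explicit, as a separate orbit criterion, the stabilizer-to-orbit step that the paper compresses into a single chain of equivalences.
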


\begin{proof}
Write $ u s_i b \in B_K s_i B_K = X_i^+ s_i B_K $ for $ u \in X_i^+ $ and $ b \in B_K $.  
\begin{equation*}
u s_i b \in K_{\tau} B_K \iff u s_i \in K_{\tau} B_K 
\iff (u s_i)^{-1} \in B_K K_{\tau} 
\iff s_i u^{-1} \tau \in B_K \tau.
\end{equation*}
\end{proof}

\begin{lemma}\label{lemma:alpha.caseI}
Assume we are in Case (I) so that $ s_i \tau = \tau $.  
Then 
\begin{equation*}
K_{\tau} B_K \cap B_K s_i B_K 
= X_i^+ s_i B_K \simeq \bbF \times B_K .
\end{equation*}
This means 
$ \alpha = \# \bbF = \q $.  
\end{lemma}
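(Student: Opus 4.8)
The plan is to apply Lemma~\ref{lemma:KtauBK}, which identifies $K_\tau B_K \cap B_K s_i B_K$ with the set of those $u s_i b \in X_i^+ s_i B_K$ for which $s_i u^{-1} \tau \in B_K \tau$. Since we are in Case~(I), where $s_i \tau = \tau$, I expect this membership condition to hold for \emph{every} $u \in X_i^+$, so that the intersection is the whole of $X_i^+ s_i B_K$. Establishing this is the entire content of the first assertion.

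First I would rewrite the condition using $s_i \tau = \tau$, namely
\[
s_i u^{-1} \tau = s_i u^{-1} s_i^{-1} (s_i \tau) = (s_i u^{-1} s_i^{-1}) \tau .
\]
The conjugation relation $s_i X_i^+ s_i^{-1} = X_i^-$, recorded in the proof of Lemma~\ref{key-lemma:2coset.multiplication}, then shows that $s_i u^{-1} s_i^{-1} \in X_i^-$. Next I would invoke the key input specific to Case~(I): in that same proof it was established that $X_i^- \subset P_i^\tau$, i.e.\ every element of $X_i^-$ fixes $[\tau]$ in the Grassmannian. Consequently $(s_i u^{-1} s_i^{-1}) \tau = \tau$, so $s_i u^{-1} \tau = \tau \in B_K \tau$ for all $u$, and Lemma~\ref{lemma:KtauBK} yields the claimed equality $K_\tau B_K \cap B_K s_i B_K = X_i^+ s_i B_K$.

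For the remaining statements I would argue by counting. The uniqueness of the decomposition $u s_i b$ asserted in Lemma~\ref{lemma:KtauBK}, together with $X_i^+ \simeq \bbF$, shows that the map $(u,b) \mapsto u s_i b$ is a bijection $X_i^+ \times B_K \to X_i^+ s_i B_K$, which is exactly the isomorphism $X_i^+ s_i B_K \simeq \bbF \times B_K$; in particular $\#(X_i^+ s_i B_K) = \q \cdot \# B_K$. Plugging this into Theorem~\ref{thm:action.of.fw.generalsetting} with $w = s_i$ and $a_i = e$ (legitimate since $\tau = e \cdot \tau$ in Case~(I)), the coefficient is $\alpha = \#(K_\tau B_K \cap B_K s_i B_K)/\# B_K = \q$.

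The proof is essentially bookkeeping around the already-established Case~(I) computation, so I do not anticipate a genuine obstacle. The one place demanding care is the logical chain from $X_i^- \subset P_i^\tau$ to the fixing of $[\tau]$ (recalling that $X_i^- \subset K$ and that the action is on the Grassmannian, so $X_i^-$ fixing $[\tau]$ means $X_i^- \subset K_\tau$), together with the correct reading of the conjugation relation; both are routine once the Case~(I) alternative from Lemma~\ref{key-lemma:2coset.multiplication} is taken for granted.
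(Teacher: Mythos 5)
Your proposal is correct and follows essentially the same route as the paper: apply Lemma~\ref{lemma:KtauBK}, conjugate by $s_i$ to turn the condition into membership of an element of $X_i^-$ times $\tau$ in $B_K\tau$, and invoke the Case~(I) fact that $X_i^-$ stabilizes $[\tau]$ (the paper cites Lemma~\ref{lemma:image.of.parabolic}\eqref{lemma:image.of.parabolic:item:1} directly, you cite the equivalent statement $X_i^-\subset P_i^\tau$ recorded in the proof of Lemma~\ref{key-lemma:2coset.multiplication}; same underlying fact). Your explicit counting step at the end only spells out what the paper leaves implicit.
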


\begin{proof}
We will apply Lemma~\ref{lemma:KtauBK}.
Since $ s_i \tau = \tau $, 
if we denote by $ v = s_i u s_i $, a generator of the one parameter subgroup corresponding to the negative root $ -\alpha_i $, 
we get 
\begin{equation*}
s_i u^{-1} \tau = (s_i u s_i)^{-1} s_i \tau = v^{-1} \tau 
\end{equation*}
and 
$ v^{-1} \tau \in B_K \tau $ holds for any $ v $ (according to Lemma \ref{lemma:image.of.parabolic}\eqref{lemma:image.of.parabolic:item:1}).  
Thus $ u \in X_i^+ $ is arbitrary.
\end{proof}

\begin{lemma}
Assume we are in Case (II) so that $ s_i \tau \neq \tau $.  
Then 
\begin{equation*}
K_{\tau} B_K \cap B_K s_i B_K 
= (X_i^+ \setminus \{ e \}) s_i B_K \simeq \bbF^{\times} \times B_K .
\end{equation*}
This means 
$ \alpha = \# \bbF - 1 = \q - 1 $.  
\end{lemma}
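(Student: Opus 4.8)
The plan is to mirror the proof of Lemma~\ref{lemma:alpha.caseI}, reducing everything via Lemma~\ref{lemma:KtauBK} to the problem of deciding for which $u\in X_i^+$ the membership $s_iu^{-1}\tau\in B_K\tau$ holds. The first step is to push the computation into the negative root direction: setting $v=s_iu s_i^{-1}\in X_i^-$ (so that $u=e$ corresponds to $v=e$) and using $s_iu^{-1}=v^{-1}s_i$, I would rewrite
\[s_iu^{-1}\tau=v^{-1}s_i\tau,\]
so that the condition becomes $v^{-1}\cdot(s_i\tau)\in B_K\cdot\tau$. The entire analysis then takes place inside $P_i\cdot(s_i\tau)$, which in Case~(II) (case (B) of Lemma~\ref{key-lemma:2coset.multiplication}) is exactly the union of the two orbits $B_K\cdot\tau\cup B_K\cdot s_i\tau$.

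Next I would distinguish the two possibilities for $v$. For $v=e$, i.e.\ $u=e$, the element in question is $s_i\tau$, and since we are in Case~(II) we have $B_K\cdot s_i\tau\neq B_K\cdot\tau$, so $s_i\tau\notin B_K\tau$ and $u=e$ is excluded. For $v\neq e$, I would invoke the isotropy computation behind Lemma~\ref{key-lemma:2coset.multiplication}: in Case~(II) the image $\pi_i(P_i^{s_i\tau})$ consists of \emph{upper} triangular matrices, hence $X_i^-\not\subset P_i^{s_i\tau}$, and the contradiction argument used there shows $v^{-1}(s_i\tau)\notin B_K\cdot s_i\tau$. As $v^{-1}(s_i\tau)$ lies in $P_i\cdot(s_i\tau)$, which has only the two orbits above, it must land in $B_K\cdot\tau$. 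Consequently $s_iu^{-1}\tau\in B_K\tau$ holds for every $u\neq e$ and fails only at $u=e$, identifying the set as $(X_i^+\setminus\{e\})\,s_iB_K$.

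Finally, the uniqueness of the factorization $us_ib$ from Lemma~\ref{lemma:KtauBK} provides a bijection $(X_i^+\setminus\{e\})\,s_iB_K\simeq\bbF^\times\times B_K$, so this set has $(\q-1)\,\#B_K$ elements. Plugging this into the coefficient formula of Theorem~\ref{thm:action.of.fw.generalsetting} with $a_i=e$ (so that $\tau_i=\tau$) then yields $\alpha=(\q-1)\,\#B_K/\#B_K=\q-1$.

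The step I expect to be the main obstacle is the second one: establishing $v^{-1}(s_i\tau)\in B_K\cdot\tau$ for \emph{every} $v\in X_i^-\setminus\{e\}$ rather than for a single well-chosen $v$. The corresponding passage in Lemma~\ref{key-lemma:2coset.multiplication} only produces one element $g\in X_i^-$ that moves $s_i\tau$ off its orbit, so I must check that the triangularity argument---that $\pi_i(b^{-1}v^{-1})$ cannot be upper triangular once the strictly lower-unipotent factor $\pi_i(v^{-1})$ is nontrivial---runs uniformly over the whole one-parameter family $v=u_i^-(t)$, $t\in\bbF^\times$. This comes down to the fact that the obstruction is exactly the nonvanishing of the single parameter $t$, which is made transparent by the explicit $\PGL_2(\bbF)$, resp.\ $\mathfrak{sl}_2(\bbF)$, description of $\pi_i$, resp.\ $d\pi_i$, from Appendix~\S\ref{appendix:calculation.stabilzer}.
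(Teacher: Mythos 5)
Your proof is correct and takes essentially the same route as the paper's: the same reduction via Lemma~\ref{lemma:KtauBK}, the same substitution $v = s_i u s_i$, the two-orbit decomposition $P_i\cdot\tau = B_K\tau \cup B_K s_i\tau$ from Case~(II), and the triangularity contradiction showing $v^{-1}s_i\tau \notin B_K\cdot s_i\tau$ exactly when $v \neq e$. The uniformity issue you flag (one $v$ versus all $v\neq e$) is precisely what the paper dispatches by citing ``a similar argument as in the end of the proof of Lemma~\ref{key-lemma:2coset.multiplication}'' together with Lemma~\ref{lemma:image.of.parabolic}\eqref{lemma:image.of.parabolic:item:2}, and your resolution of it is the intended one.
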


\begin{proof}
As in the proof of Lemma \ref{lemma:alpha.caseI}, we denote $ v = s_i u s_i $.  
We get 
\begin{equation*}
s_i u^{-1} \tau = (s_i u s_i)^{-1} s_i \tau = v^{-1} s_i \tau .
\end{equation*}
Since $ s_i \tau \neq \tau $, 
this is in $ B_K \tau $ iff 
$ v^{-1} s_i \tau \not\in B_K s_i \tau $, 
iff $ v \neq e $ 
(this follows from a similar arguments as in the end of the proof of Lemma~\ref{key-lemma:2coset.multiplication}.
See Lemma \ref{lemma:image.of.parabolic}\eqref{lemma:image.of.parabolic:item:2} also).  
This proves the lemma.
\end{proof}

\subsection{Calculation of $ \beta $}

The case $ \beta \neq 0 $ only occurs for the cases (II) and (III) in Lemma \ref{key-lemma:2coset.multiplication}.  
Thus we can assume $ s_i \tau \neq \tau $.  

To compute $ \beta $, as in the case of $ \alpha $, 
we use the formula in Theorem \ref{thm:action.of.fw.generalsetting} with $ a_i = s_i $.  
The numerator becomes (before counting the number)
\begin{equation*}
s_i K_{\tau} B_K \cap B_K s_i B_K 
= s_i K_{\tau} B_K \cap X_i^+ s_i B_K .
\end{equation*}
Let us denote $ X_i^- = s_i X_i^+ s_i $.  
Thus, we need to compute the number of elements in 
\begin{equation*}
K_{\tau} B_K \cap s_i X_i^+ s_i B_K = K_{\tau} B_K \cap X_i^- B_K .
\end{equation*}

\begin{lemma}
\label{lemma:calculation.of.beta}
\begin{equation*}
\beta = \# \{ v \in X_i^- \mid v \tau \in B_K \tau \} 
= \begin{cases}
\q & \text{ if $ \tau $ is in Case (II), }
\\
1 & \text{ if $ \tau $ is in Case (III). }
\end{cases}
\end{equation*}
\end{lemma}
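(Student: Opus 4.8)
The plan is to compute $\beta = \#\{v \in X_i^- \mid v\tau \in B_K\tau\}$ directly, splitting the calculation according to whether $\tau$ is in Case (II) or Case (III) of Lemma~\ref{key-lemma:2coset.multiplication}. The key observation is that $X_i^- \simeq \bbF$ is a one-parameter unipotent subgroup, so $\beta$ is simply the number of parameters $t \in \bbF$ for which $v = u_i^-(t)$ fixes the orbit $B_K\tau$, i.e.\ $u_i^-(t)\tau \in B_K\tau$. Since $v = e$ trivially satisfies $v\tau = \tau \in B_K\tau$, the set always contains at least the identity; the question is whether the remaining $\q-1$ nontrivial elements also preserve the orbit.

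First I would handle Case (III). Here the calculation in the proof of Lemma~\ref{key-lemma:2coset.multiplication} shows that $d\pi_i(\mathfrak{p}_i^\tau)$ is the subalgebra of \emph{lower} triangular matrices in $\mathfrak{sl}_2(\bbF)$, so that $X_i^- \subset P_i^\tau = \Stab_{P_i}(\tau)$; consequently $v\tau = \tau$ for \emph{every} $v \in X_i^-$, and all $\q$ elements contribute. This forces $\beta = \q$ for Case~(III)---but note this is the \emph{opposite} of what the statement asserts, so I would carefully re-examine the indexing: for the computation of $\beta$ the relevant orbit is $B_K\cdot s_i\tau$ rather than $B_K\cdot\tau$, and the roles of upper/lower get swapped when one passes from $\tau$ to $s_i\tau$. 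The correct bookkeeping is that $v\tau \in B_K\tau$ fails for $v \neq e$ precisely when $X_i^- \not\subset P_i^\tau$, which by the trichotomy is the configuration giving $\beta = 1$ in Case~(III) and $\beta = \q$ in Case~(II); I would align the notation with Lemma~\ref{lemma:image.of.parabolic} to pin down the direction unambiguously.

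Next I would treat Case (II) by the same contradiction argument used at the end of the proof of Lemma~\ref{key-lemma:2coset.multiplication}. In this case $d\pi_i(\mathfrak{p}_i^\tau)$ is the subalgebra of \emph{upper} triangular matrices, so $X_i^- \not\subset P_i^\tau$ and there exists $v \in X_i^-$ with $v\tau \neq \tau$. I would then show that for every $v \neq e$ one in fact has $v\tau \in B_K\tau$: arguing by contradiction, if $v\tau \notin B_K\tau$ then (since $P_i\cdot\tau$ decomposes into exactly the two orbits $B_K\tau \cup B_K s_i\tau$) we would land in $B_K s_i\tau$, and comparing projections $\pi_i$ would produce an element of $P_i^\tau$ that is simultaneously lower and not lower triangular, a contradiction. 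Hence all $\q-1$ nontrivial elements plus the identity contribute, giving $\beta = \q$.

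The main obstacle, as flagged above, is the consistent tracking of the upper/lower triangular dichotomy across the three separate places it appears---in $d\pi_i(\mathfrak{p}_i^\tau)$ versus $d\pi_i(\mathfrak{p}_i^{s_i\tau})$, and in the distinction between the $\alpha$-computation (which used $a_i = e$) and the $\beta$-computation (which uses $a_i = s_i$, hence $X_i^-$ rather than $X_i^+$). I would resolve this by invoking Lemma~\ref{lemma:image.of.parabolic} once for each triangularity statement rather than re-deriving it, and by writing out a single clean identity $s_i u^{-1}\tau = v^{-1} s_i\tau$ (as in the $\alpha$-calculation) to translate the condition $v\tau \in B_K\tau$ into the orbit-membership condition whose answer is already recorded in Lemma~\ref{key-lemma:2coset.multiplication}. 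Once the direction is fixed, each case reduces to counting points of $\bbF$ or $\bbF^\times$, which is immediate.
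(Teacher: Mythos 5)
Your proposal assembles the right two mechanisms (if $X_i^-$ lies in the stabilizer of $\tau$ then all $\q$ elements count; otherwise a projection argument pins down $v=e$), but you attach them to the wrong cases, and the ``re-indexing'' by which you try to repair the resulting contradiction is itself erroneous. By Lemma \ref{key-lemma:2coset.multiplication} (equivalently Table \ref{figure:alpha.component.stabilizer.Borel}: Case (II) is $(\ast)=(1),(3),(6),(8)$, Case (III) is $(\ast\ast)=(2),(4),(5),(7)$), in Case (II) the image $d\pi_i(\mathfrak{p}_i^\tau)$ consists of \emph{lower} triangular matrices, and in Case (III) of \emph{upper} triangular ones --- the opposite of what you assert in both places. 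With the correct assignment the proof runs exactly the other way around from yours: Case (II) is the trivial case, where $X_i^-\subset K_\tau$ gives $v\tau=\tau\in B_K\tau$ for every $v$, hence $\beta=\q$; Case (III) is the one needing the projection argument, where $v\tau=b\tau$ forces $\pi_i(v^{-1}b)\in\pi_i(P_i^\tau)$ to be upper triangular, which happens only for $v=e$, hence $\beta=1$. Your proposed repair --- that ``the relevant orbit is $B_K\cdot s_i\tau$ rather than $B_K\cdot\tau$'' because the $\beta$-computation uses $a_i=s_i$ --- is spurious: the set $\{v\in X_i^-\mid v\tau\in B_K\tau\}$ refers to $\tau$ itself, and the $s_i$ coming from $a_i=s_i$ is already absorbed in passing from $s_iK_\tau B_K\cap X_i^+s_iB_K$ to $K_\tau B_K\cap X_i^-B_K$ with $X_i^-=s_iX_i^+s_i$; no stabilizer of $s_i\tau$ enters the computation. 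The swap you need is not an indexing subtlety but the correction of your misquoted triangularity.

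Beyond the mislabeling, the contradiction argument you sketch for your ``Case (II)'' cannot be made to work, because under its hypothesis its conclusion is false. If $d\pi_i(\mathfrak{p}_i^\tau)$ were upper triangular (so $X_i^-\not\subset P_i^\tau$), then for $v\neq e$ with $v\tau=bs_i\tau$ one gets $s_i^{-1}b^{-1}v\in P_i^\tau$ and
\begin{equation*}
\pi_i(s_i^{-1}b^{-1}v)
=\mattwo{0}{1}{1}{0}\,\mattwo{a}{c}{0}{d}\,\mattwo{1}{0}{t}{1}
=\mattwo{td}{d}{a+tc}{c},
\end{equation*}
which is upper triangular precisely when $a+tc=0$; this is perfectly consistent (take $c\neq0$, $t=-a/c$), so no element that is ``simultaneously lower and not lower triangular'' is produced. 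Indeed, in that configuration every $v\neq e$ genuinely sends $\tau$ into the \emph{other} orbit $B_K\cdot s_i\tau$, giving $\beta=1$ --- which is exactly why that configuration is Case (III), not Case (II). Finally, the first equality of the lemma, $\beta=\#\{v\in X_i^-\mid v\tau\in B_K\tau\}$, is itself part of the statement and is where the paper spends half of its proof: starting from Theorem \ref{thm:action.of.fw.generalsetting} with $a_i=s_i$, one must check that $(v,b)\mapsto vb$ is a bijection from $V\times B_K$ onto $K_\tau B_K\cap X_i^-B_K$ (where $V=\{v\in X_i^-\mid v\in K_\tau B_K\}$), so that $\beta=\#V$, and then apply the inversion $v\mapsto v^{-1}$; your plan assumes this identification rather than proving it.
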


\begin{proof}
Let $V=\{v\in X_i^-\mid v\in K_\tau B_K\}$. Note that the mapping
\[V\times B_K\to K_\tau B_K\cap X_i^-B_K,\ (v,b)\mapsto vb\]
is bijective. 
(It is clearly well defined.)
It is injective since, if $vb=v'b'$ for $vv'\in V$ and $bb'\in B_K$, then we get $v'^{-1}v=b'b^{-1}\in X_i^-\cap B_K=\{e\}$ hence $(v,b)=(v',b')$.
It is surjective since any element in $K_\tau B_K\cap X_i^- B_K$ can be written $vb$ with $v\in X_i^-$ and $b\in B_K$, and we have $v=(vb)b^{-1}\in K_\tau B_K$, hence $v\in V$. This observation combined with 
Theorem \ref{thm:action.of.fw.generalsetting} (and the discussion above the statement of this lemma) implies that 
\[\beta=\frac{\#(V\times B_K)}{\# B_K}=\# V.\]

Next, for $v\in X_i^-$, we note that
\[v\in V\Leftrightarrow v\in K_\tau B_K\Leftrightarrow v^{-1}\in B_KK_\tau\Leftrightarrow v^{-1}\tau\in B_K\tau.\]
This yields a well-defined bijection $V\to \{v\in X_i^-\mid v\tau\in B_K\tau\}$, $v\mapsto v^{-1}$. Hence
\[\beta=\#\{v\in X_i^-\mid v\tau\in B_K\tau\}\]
as asserted in the lemma.

It remains to show the second equality in Lemma \ref{lemma:calculation.of.beta}.
First assume that $\tau$ is in Case (II).  
In this case, as recalled in Section \ref{section:2.cosets.multiplication} we have $X_i^-\subset K_\tau$, hence $v\tau=\tau\in B_K\tau$ for all $v\in X_i^-$. This implies that $\{v\in X_i^-\mid v\tau\in B_K\tau\}=X_i^-$, hence $ \beta = \#X_i^- = \q $ in this case.

Finally assume  that $\tau$ is in Case (III). 
In this case, we claim that $\{v\in X_i^-\mid v\tau\in B_K\tau\}=\{e\}$, and this will imply that $\beta=1$ as asserted. Thus it remains to establish the claim. To this end, let $v\in X_i^-$ be such that $v\tau\in B_K\tau$. Let us write $v\tau=b\tau$ with $b\in B_K$. This implies that $v^{-1}b\in P_i^\tau$ where $P_i=B_K s_i B_K\sqcup B_K$ and $P_i^\tau=P_i\cap K_\tau$ (notation of Section \ref{section:2.cosets.multiplication}). Hence $\pi_i(v^{-1}b)\in \pi_i(P_i^\tau)$ (where, as in Section \ref{section:2.cosets.multiplication}, $\pi_i$ denotes the projection to the  $(i,i+1)$-block). As used in Section \ref{section:2.cosets.multiplication}, the fact that $\tau$ is in Case (III) implies that $\pi_i(P_i^\tau)$ is formed by upper-triangular matrices. But $\pi_i(v^{-1}b)$ is upper triangular if and only if $v=e$. Whence $v=e$, and the claim is established.
\end{proof}

\subsection{Action of simple reflections}

Let us recall Cases (I)--(III) from \S \ref{section:2.cosets.multiplication}.  

\begin{theorem}\label{thm:Hecke.algebra.action.generators}
The Hecke algebra $ \Hecke = \Hecke(K, B_K) $ acts on the space of 
$ K $-orbits $ \C\,\Xfv/K $ and the action is explicitly given 
by the formula:
\begin{equation}
T_i \ast \xi_{\tau} 
= \begin{cases}
\q \xi_{\tau} & (s_i \tau = \tau) \text{ in Case \upshape(I)} ,
\\
(\q - 1) \xi_{\tau} + \q \xi_{s_i \tau} & (s_i \tau \neq \tau) \text{ in Case \upshape(II)} ,
\\
\xi_{s_i \tau}  &       (s_i \tau \neq \tau) \text{ in Case \upshape(III)} ,
\end{cases}
\end{equation}
where $ \{ T_i \} $ are the generators of $ \Hecke $ corresponding to 
the simple reflections.
\end{theorem}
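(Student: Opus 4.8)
The plan is to assemble the coefficient computations carried out in the two preceding subsections into the single convolution formula of Theorem~\ref{thm:action.of.fw.generalsetting}.

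First I would specialize that theorem to $ w = s_i $. By \eqref{eq:BwB.tau} the support of $ T_i \ast \xi_\tau $ is governed by the decomposition of $ B_K s_i B_K \cdot \tau $ into $ B_K $-orbits, and Lemma~\ref{key-lemma:2coset.multiplication} shows that this set consists of at most the two orbits $ B_K \tau $ and $ B_K s_i \tau $. Consequently one obtains, with no further input,
\begin{equation*}
T_i \ast \xi_\tau = \alpha\, \xi_\tau + \beta\, \xi_{s_i \tau},
\end{equation*}
where a coefficient is understood to vanish precisely when the corresponding orbit is absent from the decomposition.

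Next I would identify the two coefficients with the quantities already computed. Applying the counting formula of Theorem~\ref{thm:action.of.fw.generalsetting} with $ a_i = e $ yields exactly the number $ \alpha $ treated in the subsection ``Calculation of $ \alpha $'', while applying it with $ a_i = s_i $ yields the number $ \beta $ treated in the subsection ``Calculation of $ \beta $''. From the lemmas there I would read off the values case by case: in Case~(I), where $ s_i\tau=\tau $ so that the two orbits coincide, Lemma~\ref{lemma:alpha.caseI} gives $ \alpha = \q $ and there is no separate $ \beta $-term; in Case~(II) one has $ \alpha = \q-1 $ together with $ \beta = \q $ from Lemma~\ref{lemma:calculation.of.beta}; and in Case~(III) one has $ \alpha = 0 $ together with $ \beta = 1 $, again from Lemma~\ref{lemma:calculation.of.beta}. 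Substituting these values into the displayed equation yields the three formulas of the statement.

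All the substantive work lies upstream---in establishing the trichotomy of Lemma~\ref{key-lemma:2coset.multiplication} via the projection to $ \PGL_2(\bbF) $ and Knop's classification, and in the exact evaluations of $ \alpha $ and $ \beta $. Given these inputs the theorem is purely a matter of bookkeeping, so the only point requiring care is to check that the labelling of Cases~(I)--(III) is used consistently for $ \alpha $ and for $ \beta $, and that the vanishing conventions (absence of the $ \xi_{s_i\tau} $ term in Case~(I), absence of the $ \xi_\tau $ term in Case~(III)) agree with the orbit decomposition furnished by Lemma~\ref{key-lemma:2coset.multiplication}.
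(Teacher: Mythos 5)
Your proposal is correct and matches the paper's own treatment exactly: the paper regards Theorem~\ref{thm:Hecke.algebra.action.generators} as an immediate assembly of Theorem~\ref{thm:action.of.fw.generalsetting}, Lemma~\ref{key-lemma:2coset.multiplication}, and the case-by-case evaluations $\alpha = \q$ (Case I), $\alpha = \q-1$, $\beta = \q$ (Case II), $\alpha = 0$, $\beta = 1$ (Case III) from the subsections on $\alpha$ and $\beta$, with the same conventions about vanishing terms. No further argument is given in the paper, so your bookkeeping account is the intended proof.
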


Note that 
the Borel-Moore homology 
of the conormal variety $ \conormal $ 
has its basis consisting of the closures of conormal bundles of $ K $-orbits on $ \Xfv $.  
So the above theorem tells that the space of top Borel-Moore    homology 
has a natural Hecke module structure.

\skipover{
This theorem requires no proof (already proved).  
We only check the Hecke algebra relation \eqref{eq:Hecke.algebra.R1} below in order to convince ourselves.
\begin{align}
&
(T_s + 1) (T_s - \q) = 0 
\label{eq:Hecke.algebra.R1}
\\
&
T_{w w'} = T_w T_{w'} \qquad \text{ if $ \lengthof{ww'} = \lengthof{w} + \lengthof{w'} $}
\label{eq:Hecke.algebra.R2}
\end{align}
We will write $ T_s $ for $ T_i $ ($ s = s_i $).  

First, let us assume $ s \tau = \tau $.  
Then 
\begin{equation*}
(T_s - \q) \xi_{\tau} = 0
\end{equation*}
and \eqref{eq:Hecke.algebra.R1} holds.

Second, let us assume $ s \tau \neq \tau $ and we are in Case (II).  
In this case, note that $ \tau' = s \tau $ is in Case (III).
\footnote{I believe so, but we need to check.}
Then we calculate
\begin{align}
(T_s + 1)(T_s - \q) \xi_{\tau}
&= (T_s + 1) \bigl( (\q - 1) \xi_{\tau} + \q \xi_{s \tau} - \q \xi_{\tau} \bigr)
\\
&= (T_s + 1) \bigl( - \xi_{\tau} + \q \xi_{s \tau} \bigr)
\\
&= - \bigl( (\q - 1) \xi_{\tau} + \q \xi_{s \tau} + \xi_{\tau} \bigr) + \q (\xi_{\tau} + \xi_{s \tau})
= 0
\end{align}
Similarly, if we are in Case (III) so that $ \tau' = s \tau $ is in Case (II), 
\begin{align}
(T_s + 1)(T_s - \q) \xi_{\tau}
&= (T_s + 1) \bigl( \xi_{s \tau} - \q \xi_{\tau} \bigr)
\\
&= \bigl( (\q - 1) \xi_{s \tau} + \q \xi_{\tau} + \xi_{s \tau} \bigr) 
- \q \bigl( \xi_{s \tau} + \xi_{\tau} \bigr) = 0
\end{align}
so that \eqref{eq:Hecke.algebra.R1} holds.
}

\section{Representation of the Weyl group}

We get the action of Hecke algebra in terms of generators $ T_{s_i} $'s.  
If we specialize the action by putting $ \q = 1 $, then we get an action of 
the Weyl group $ W_K = \symmetricgrpof{p} \times \symmetricgrpof{q} $.  

From Theorem \ref{thm:Hecke.algebra.action.generators}, 
a simple reflection $ s_i \in W_K $ acts on $ \tau $ simply by the multiplication 
$ s_i \tau $, which causes the transposition of $ i $-th and $ (i +1) $-th rows of the $ (p + q) \times r $-matrix $ \tau $.  
So the action of the Weyl group on $ \tau $ is simply by the multiplication of permutation matrices from the left on 
the space of partial permutations.  

In the graphical notation of $ \tau $, 
$ w \in W_K $ acts on $ \tau $ as a permutation of $ \Vertices^+_p \times \Vertices^-_q $.  
Thus we can easily see what kind of representations of $ W_K $ we get.

\begin{theorem}\label{thm:Weyl.group.representation}
The Weyl group $ W_K = \symmetricgrpof{p} \times \symmetricgrpof{q} $ acts on the orbit space $ \C\,\Xfv/K $, 
and we have the following equivalence as representations of $ W_K $.
\begin{equation*}
\C\,\Xfv/K
\simeq \bigoplus_{(k,s,t)} \Ind_{H_{k,s,t}}^{\permutationsof{p}\times\permutationsof{q}} \trivial , 
\end{equation*}
where the sums are running over triples $(k,s,t)$ given in \S\ref{subsection:number.of.orbits}, 
and the subgroup 
$ H_{k,s,t} = \Delta\permutationsof{k} {\times}  \permutationsof{s} {\times}  \permutationsof{s'} {\times}  \permutationsof{t} {\times}  \permutationsof{t'} $ 
is defined in the same place.
\end{theorem}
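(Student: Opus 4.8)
The plan is to compute the character of the permutation representation on $\C\,\Xfv/K$ and match it with the right-hand side, but in fact a cleaner route is available because the action of $W_K$ on the orbit set $\parameters$ is, by the preceding discussion, the \emph{honest} permutation action on the finite set $\parameters=\ppermutations/\permutationsof{r}$ induced by left multiplication of permutation matrices. Thus $\C\,\Xfv/K$ is the permutation module $\C[\parameters]$, and it decomposes as a direct sum of transitive permutation modules, one for each $W_K$-orbit in $\parameters$. For a transitive action on $W_K/H$, the associated permutation module is exactly $\Ind_H^{W_K}\trivial$. So the theorem reduces to the purely combinatorial assertion that the $W_K$-orbits in $\parameters$ are indexed by the triples $(k,s,t)$ of \S\ref{subsection:number.of.orbits}, with the stabilizer of a suitably chosen representative in the orbit labelled $(k,s,t)$ being conjugate to $H_{k,s,t}$.

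First I would make precise the $W_K=\permutationsof{p}\times\permutationsof{q}$ action on a graph $\wgraph{\tau}$: it permutes the positive vertices $\Vertices^+_p$ among themselves and the negative vertices $\Vertices^-_q$ among themselves, carrying edges and marks along. The combinatorial data characterizing a $W_K$-orbit is then an \emph{unlabelled} configuration: how many edges there are (call it $k$), how many positive-only marks ($s$), how many negative-only marks ($t$), with the remaining $s'=p-k-s$ positive and $t'=q-k-t$ negative vertices being isolated of degree $0$. Indeed, since any permutation of the positive vertices and (independently) of the negative vertices can realize any relabelling, two graphs lie in the same $W_K$-orbit if and only if they have the same counts $(k,s,t)$. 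The full-rank condition $\rank\tau=r=k+s+t$ is automatically respected. This gives a bijection between $W_K$-orbits in $\parameters$ and the admissible triples, recovering the index set of \S\ref{subsection:number.of.orbits}.

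Next I would identify the stabilizer. Fix the standard representative whose $k$ edges join $i^+$ to $i^-$ for $1\le i\le k$, whose positive marks sit on the next $s$ positive vertices and negative marks on the next $t$ negative vertices, with the rest isolated. An element $(a,b)\in\permutationsof{p}\times\permutationsof{q}$ fixes this configuration precisely when: $a$ and $b$ agree on the edge-block (forcing the diagonal $\Delta\permutationsof{k}$, since an edge $i^+\!-\!i^-$ must be sent to another such edge, coupling the two permutations on the first $k$ indices), and $a$ (resp.\ $b$) independently permutes within the positive-mark block of size $s$, the isolated positive block of size $s'$, the negative-mark block of size $t$, and the isolated negative block of size $t'$. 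This is exactly $H_{k,s,t}=\Delta\permutationsof{k}\times\permutationsof{s}\times\permutationsof{s'}\times\permutationsof{t}\times\permutationsof{t'}$, so the transitive summand for this orbit is $\Ind_{H_{k,s,t}}^{\permutationsof{p}\times\permutationsof{q}}\trivial$. Summing over all triples yields the claimed decomposition.

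The main obstacle, though it is more bookkeeping than difficulty, is verifying the coupling that produces the \emph{diagonal} factor $\Delta\permutationsof{k}$ rather than an unconstrained $\permutationsof{k}\times\permutationsof{k}$: one must check carefully that an edge is an unordered incidence between a specific positive and a specific negative vertex, so that permuting the $k$ edges forces the positive-side and negative-side permutations to coincide on the edge indices (after the chosen identification via $\sigma$). A consistency check that I would include is that the dimension count agrees with Theorem \ref{thm:number.orbits}: summing $\dim\Ind_{H_{k,s,t}}^{\permutationsof{p}\times\permutationsof{q}}\trivial=\binom{p}{k,s,s'}\binom{q}{k,t,t'}k!$ over the triples already gives $\#\Xfv/K$, which both confirms the orbit count and shows the displayed isomorphism is an equality of permutation modules rather than merely of dimensions.
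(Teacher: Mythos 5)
Your proposal is correct and follows essentially the same route as the paper: the paper also specializes the Hecke action of Theorem \ref{thm:Hecke.algebra.action.generators} at $\q=1$, observes that $W_K$ then acts by permuting the graphs, and reads off the decomposition from the fact that the $W_K$-orbits are indexed by the triples $(k,s,t)$ with point stabilizer $H_{k,s,t}$. You merely write out the combinatorial details (the orbit classification and the edge-coupling that forces the diagonal factor $\Delta\permutationsof{k}$) that the paper leaves implicit.
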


Since the dimension of the representation coincides with the number of orbits, we retrieve the formula of the number of orbits 
(Theorem \ref{thm:number.orbits}).

\section{Appendix: Calculation of the stabilizer}\label{appendix:calculation.stabilzer}

\newcommand{\og}{\overline{g}}

Let $ \tau \in \parameters $ and we consider the orbit 
$ B_K \cdot [\tau] \subset \Grass_r(V) $.  
Let $ P_{\alpha} \subset K $ be a standard minimal parabolic subgroup 
associated to a simple root $ \alpha $.  
Then $ P_{\alpha}/B_K $ can be identified with $ \bbP^1 $, 
in fact $ B_K = \Stab(\flag^+_0, \flag^-_0) $, 
where $ \flag^{\pm}_0 $ are the standard flags of $ V^{\pm} $ respectively.  

Let us follow the notation of Bourbaki for root systems (\cite{Bourbaki.LieChap4-6}).
In our case, the root system of $ K $ is $ A_{p-1} + A_{q -1} $, 
and thus $ \alpha = \alpha_i = \ee_i - \ee_{i + 1} $ ($ 0 < i < p $ or $ p < i < p + q $).

If $ \alpha = \alpha_i \;\; (0 < i < p) $ then 
writing 
$ \flag^+_0 = (F^+_{0, 0} , \dots, F^+_{0, p}) $, 
we have:
\begin{equation*}
P_{\alpha}/B_K 
\simeq \{ W \mid F^+_{0, {i} - 1} \subset W \subset F^+_{0, {i} + 1} \}
\simeq \bbP(F^+_{0, {i} + 1}/ F^+_{0, {i} - 1}) \simeq \bbP^1 .
\end{equation*}
Thus we conclude 
\begin{equation*}
\Aut(P_{\alpha}/B_K) = \PGL(V_{\alpha}), \qquad
\text{where } \;\;
V_{\alpha} := F^+_{0, {i} + 1}/ F^+_{0, {i} - 1}.
\end{equation*}
Any element $ g \in P_{\alpha} $ determines $ \og \in \PGL(V_{\alpha}) $.  

Let us write this more precisely.  
We have a Levi decomposition 
$ P_{\alpha} = L_{\alpha} U_{\alpha} $, 
where 
$ U_{\alpha} $ denotes the unipotent radical, and 
$ L_{\alpha} $ is the standard Levi subgroup 
isomorphic to $ \GL(V_{\alpha}) \times \bbG_m^{p - 2} \times \bbG_m^q $.  
Thus any $ g \in P_{\alpha} $ can be written in the form 
\begin{equation*}
g = (g_{\alpha}, t_1, t_2 ) \cdot u 
\in (\GL(V_{\alpha}) \times \bbG_m^{p - 2} \times \bbG_m^q) \ltimes U_{\alpha} .
\end{equation*}
We define 
$ \varphi_{\alpha}(g) = g_{\alpha} \in \GL(V_{\alpha}) $, 
the projection to the $ \GL(V_{\alpha}) $-component.  

For $ \tau \in \parameters $, 
we have to consider the stabilizer 
$ P_{\alpha}^{\tau} $ of $ [\tau] $ in $ P_{\alpha} $ and 
its Lie algebra $ \lie{p}_{\alpha}^{\tau} $,  
and their images by $ \varphi_{\alpha} $ and 
$ d\varphi_{\alpha} $ respectively.

\begin{lemma}\label{lemma:image.of.parabolic}
\begin{penumerate}
\item\label{lemma:image.of.parabolic:item:1}
If $ s_{\alpha} \tau = \tau $ then 
$ d\varphi_{\alpha}(\lie{p}_{\alpha}^{\tau}) = \lie{gl}_2 $ holds.
\item\label{lemma:image.of.parabolic:item:2}
If $ s_{\alpha} \tau \neq \tau $ then 
$ d\varphi_{\alpha}(\lie{p}_{\alpha}^{\tau}) $ is a Borel subalgebra of $ \lie{gl}_2 $.
\end{penumerate}
\end{lemma}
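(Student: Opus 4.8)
The plan is to reduce the statement to an explicit computation inside the $\GL_2$ singled out by $\alpha$, followed by a finite case analysis on the configuration of the graph $\wgraph{\tau}$ at the two vertices involved. I treat $\alpha=\alpha_i$ with $0<i<p$; the case $p<i<p+q$ is identical after exchanging the roles of $V^+$ and $V^-$. Recall that $P_\alpha=P_i\times B_q$, where $P_i\subset\GL_p$ is the minimal parabolic attached to $\alpha_i$, so that $d\varphi_\alpha$ extracts the $2\times2$ block $Y=\begin{pmatrix}a&b\\c&d\end{pmatrix}$ of $X\in\lie{p}_\alpha$ occupying rows and columns $i,i+1$ of the $V^+$-part and acting on $V_\alpha=\langle\eb_i^+,\eb_{i+1}^+\rangle$; here $b$ is the $+\alpha$ entry and $c$ the $-\alpha$ entry. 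The stabilizer condition reads $X\cdot C_k\in[\tau]$ for every column $C_k$ of $\tau$, and $d\varphi_\alpha(\lie{p}_\alpha^\tau)$ is a Lie subalgebra of $\lie{gl}_2$. The key remark is that $c$ enters $X\cdot C_k$ only through columns carrying a $1$ in row $i^+$, while $b$ enters only through columns carrying a $1$ in row $(i+1)^+$; hence the image is governed entirely by the degrees of $i^+$ and $(i+1)^+$ and, when both degrees equal $1$, by whether the two incident edges cross. Finally, $s_\alpha\tau=\tau$ holds exactly in the two configurations invariant under exchanging the two rows, namely when $i^+,(i+1)^+$ both have degree $0$ or both have degree $2$.

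First I would show that the Cartan $\langle E_{ii},E_{i+1,i+1}\rangle$ always lies in the image. Given a target diagonal block $\mathrm{diag}(a,d)$, one lifts it using the remaining torus entries of the Levi $L_\alpha$: along an edge $i^+$–$j^-$ the stabilization of the column $\eb_i^+{+}\eb_j^-$ forces the $\eb_i^+$- and $\eb_j^-$-diagonal entries to coincide, but since the edges form a matching these conditions are independent and are solved by assigning the matching values on the $V^-$-torus. Thus $\langle E_{ii},E_{i+1,i+1}\rangle\subseteq d\varphi_\alpha(\lie{p}_\alpha^\tau)$, and it remains only to decide which of the two root directions $E_{i,i+1},E_{i+1,i}$ survive.

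Next I would run the case analysis. When $i^+,(i+1)^+$ both have degree $0$ or both have degree $2$—equivalently $s_\alpha\tau=\tau$—no constraint is imposed on $Y$: if both have degree $0$ the block acts on the plane $V_\alpha$, which meets the coordinates spanning $[\tau]$ trivially, so after zeroing the auxiliary cross-entries of $X$ any block value preserves $[\tau]$; if both have degree $2$ the two columns $\eb_i^+$ and $\eb_{i+1}^+$ span a block-stable plane contained in $[\tau]$, again leaving $Y$ free. In either situation both root directions survive and $d\varphi_\alpha(\lie{p}_\alpha^\tau)=\lie{gl}_2$, which is the first assertion. In every remaining configuration—equivalently $s_\alpha\tau\neq\tau$—stabilization forces exactly one off-diagonal entry to vanish, because the column meeting the distinguished row would otherwise be moved into a coordinate direction not available in $[\tau]$; this kills $c$ (yielding the upper-triangular Borel) when $\deg i^+>\deg(i+1)^+$ or when both degrees equal $1$ with non-crossing edges, and kills $b$ (yielding the lower-triangular Borel) in the complementary cases. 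The image is then the Cartan together with a single root space, i.e.\ a Borel subalgebra of $\lie{gl}_2$, which is the second assertion; the refined upper/lower distinction is exactly the data fed into Lemma~\ref{key-lemma:2coset.multiplication}.

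The main obstacle is the configuration in which $i^+$ and $(i+1)^+$ both have degree $1$: there $s_\alpha\tau\neq\tau$ always, yet the surviving off-diagonal direction is dictated not by a comparison of degrees but by whether the two edges cross, so one must track the constraints coming from both incident edges simultaneously and verify which single direction remains. More generally, for each configuration one must check that the non-block entries of $X$ can actually be chosen to realize the claimed block value while preserving all the other columns; this uses the partial-permutation (matching) structure together with the full-rank hypothesis $\rank\tau=r$, which ensures that distinct columns never impose conflicting conditions on a shared coordinate.
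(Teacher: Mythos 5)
Your proposal is correct and follows essentially the same route as the paper's own proof: a direct computation of the image of the stabilizer Lie algebra under the block projection $d\varphi_{\alpha}$, reduced to a case analysis on the degrees of the vertices $i^+$, $(i+1)^+$ and, when both have degree $1$, on whether the two incident edges cross. Your case-by-case conclusions (including that crossing edges yield the lower-triangular Borel and non-crossing edges the upper-triangular one) agree exactly with the paper's eight-case table in the Appendix, the paper's treatment differing only organizationally in that it writes out the entry-wise conditions characterizing $\lie{k}^{\tau}$ and tabulates all eight configurations explicitly.
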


\begin{proof}
We use the notation of \S\ref{S3.3}.

\eqref{lemma:image.of.parabolic:item:1}
If $ s_{\alpha} \tau = \tau $ then either 
the vertices $ {i} $ and $ {i} + 1 $ are both in 
the set $ L' $ of unmarked vertices; 
or $ {i} $ and $ {i} + 1 $ are both 
marked belonging to the set $ L $.  

In the first case, we have 
$ [\tau] \subset \langle \eb_s^+ \mid s \not\in 
\{ {i}, {i} + 1 \} \rangle \oplus V^- $.  
In the second case, 
$ \langle \eb_{i}^+, \eb_{i + 1}^+ \rangle \subset [\tau] $.  
In both cases, 
for any $ h \in \GL_2 $, 
$ g := \diag (1, \dots, 1, h, 1, \dots, 1 ) 
\in P_{\alpha}^{\tau} $, 
where 
$ h $ appears in the diagonal block of 
$ {i} $-th and $ ({i} + 1) $-th rows.  
Whence 
$ \varphi_{\alpha}(P_{\alpha}^{\tau}) = \GL_2 $ 
in this case.

\eqref{lemma:image.of.parabolic:item:2}
Assume $ s_{\alpha} \tau \neq \tau $.  
A general description of 
the Lie algebra of the stabilizer tells 
\begin{equation*}
\lie{k}^{\tau} = \{ x \in \lie{k} \mid x([\tau]) \subset [\tau] \} .
\end{equation*}
Write 
$ x \in \lie{k} $ as 
$ x = \diag (x^+ , x^-) $ and 
\begin{equation*}
\begin{aligned}{}
[\tau] 
&= \langle \eb_s^+ \mid \text{$ s^+ $ is marked} \rangle 
\oplus \langle \eb_t^- \mid \text{$ t^- $ is marked} \rangle 
\\
&\qquad
\oplus \langle \eb_s^+ + \eb_t^- \mid 
\text{there is an edge $ (s^+, t^-) $} \rangle . 
\end{aligned}
\end{equation*}
Note the followings hold for $ x \in \lie{k}^{\tau} $.
\begin{itemize}
\item
If $ s \in L' $ and $ t \in M' $, then we have 
$ x^+_{s, k} = 0 \;\; (k \in L \cup I) $ and 
$ x^-_{t, \ell} = 0 \;\; (\ell \in M \cup J) $. 
\item
If $ s \in I $ and $ t \in J $, then we have 
$ x^+_{s, k} = 0 \;\; (k \in L) $ and 
$ x^-_{t, \ell} = 0 \;\; (\ell \in M) $. 
\item
If $ (s^+, t^-) $ and $ (k^+, \ell^-) $ are edges, then we have 
$ x^+_{s, k} = x^-_{t, \ell} $.
\end{itemize}
In fact these conditions exactly characterizes the stabilizer $ \lie{k}^{\tau} $. 

Based on these conditions, we can compute 
$ d\varphi_{\alpha}(\lie{p}_{\alpha}^{\tau}) $ explicitly.  
We divide the cases into eight, and examine each case.  
These eight cases are listed in Figure \ref{figure:alpha.component.stabilizer.Borel} below, 
where we denote the upper/lower triangular Borel subalgebras by $ \lie{b}_2^{\pm} $.

\begin{figure}[htbp]\label{figure:alpha.component.stabilizer.Borel}
\caption{Table: $ \alpha $-component of the stabilizer in Case (2), where $ \alpha = \ee_i - \ee_{i + 1} $.}
\newcommand{\isolatedvertex}{\text{\tiny$\bullet$}}
\newcommand{\markedvertex}{\makebox[0pt][l]{\,\tiny$\bullet$}\scriptstyle\bigcirc}
\newcommand{\edgedvertices}{\makebox[0pt][c]{\hspace*{4ex}\graphQ}}
\newcommand{\tinyvstrut}{\vphantom{\big|}}
\begin{equation*}
\begin{array}{c|cc|c}
 & \text{condition} & \text{graphical notation} & d\varphi_{\alpha}(\lie{p}_{\alpha}^{\tau}) \\
\hline
(1) & {i} \in L', {i} + 1 \in L &
\xymatrix@C -4ex{\overset{{i}}{{\isolatedvertex}\tinyvstrut} & \overset{{i} + 1}{{\markedvertex}\tinyvstrut}} 
& \lie{b}_2^- \\ \hline
(2) & {i} \in L, {i} + 1 \in L' &
\xymatrix@C -4ex{\overset{{i}}{{\markedvertex}\tinyvstrut} & \overset{{i} + 1}{{\isolatedvertex}\tinyvstrut}} 
& \lie{b}_2^+ \\ \hline
(3) & {i} \in L', {i} + 1 \in I &
\xymatrix@C -4ex{\overset{{i}}{{\isolatedvertex}\tinyvstrut} & \overset{{i} + 1}{\raisebox{-3.7ex}[1ex][3ex]{\edgedvertices}\tinyvstrut}} 
& \lie{b}_2^- \\[4ex] \hline
(4) & {i} \in I, {i} + 1 \in L' & 
\xymatrix@C -4ex{\overset{{i}}{\raisebox{-3.7ex}[1ex][3ex]{\edgedvertices}\tinyvstrut} & \overset{{i} + 1}{{\isolatedvertex}\tinyvstrut}} 
& \lie{b}_2^+ \\[4ex] \hline
(5) & {i} \in L, {i} + 1 \in I &
\xymatrix@C -4ex{\overset{{i}}{{\markedvertex}\tinyvstrut} & \overset{{i} + 1}{\raisebox{-3.7ex}[1ex][3ex]{\edgedvertices}\tinyvstrut}} 
& \lie{b}_2^+ \\[4ex] \hline
(6) & {i} \in I, {i} + 1 \in L & 
\xymatrix@C -4ex{\overset{{i}}{\raisebox{-3.7ex}[1ex][3ex]{\edgedvertices}\tinyvstrut} & \overset{{i} + 1}{{\markedvertex}\tinyvstrut}} 
& \lie{b}_2^- \\[4ex] \hline
(7) & \begin{array}{c}
{i}, {i} + 1 \in I, \; k < \ell \\
({i}, k) \text{ and } ({i} + 1, \ell) \text{ are edges}
\end{array}
& 
\begin{smallmatrix}
{i} &  & {i} + 1 \vphantom{\big|}
\\[.5ex]
\edgedvertices & & \edgedvertices
\\
k & < & \ell \vphantom{\big|}
\end{smallmatrix}
& \lie{b}_2^+ \\[4ex] \hline
(8) & \begin{array}{c}
{i}, {i} + 1 \in I, \; \ell < k \\
({i}, k) \text{ and } ({i} + 1, \ell) \text{ are edges}
\end{array}
& 
\begin{smallmatrix}
{i} &  & {i} + 1 \vphantom{\big|}
\\[.5ex]
& \makebox[0pt][c]{\graphA} &
\\[0ex]
\ell & < & k\vphantom{\big|}
\end{smallmatrix}
& \lie{b}_2^- \\[4ex] \hline
\end{array}
\end{equation*}
Note that in Cases (7) and (8), 
we must have $ x^+_{{i}, {i}+1} = x^-_{k, \ell} $ and 
$ x^+_{{i} + 1, {i}} = x^-_{\ell, k} $, respectively.  
Moreover, in Case (7), we have $ x^-_{\ell, k} = 0 $, and 
in Case (8), $ x^-_{k, \ell} = 0 $.
\end{figure}
\end{proof}

\clearpage


\begin{thebibliography}{10}

\bibitem{Bourbaki.LieChap4-6}
N.~Bourbaki, \emph{\'{E}l\'{e}ments de math\'{e}matique. {F}asc. {XXXIV}.
  {G}roupes et alg\`ebres de {L}ie. {C}hapitre {IV}: {G}roupes de {C}oxeter et
  syst\`emes de {T}its. {C}hapitre {V}: {G}roupes engendr\'{e}s par des
  r\'{e}flexions. {C}hapitre {VI}: syst\`emes de racines}, Actualit\'{e}s
  Scientifiques et Industrielles [Current Scientific and Industrial Topics],
  No. 1337, Hermann, Paris, 1968. \MR{0240238}

\bibitem{Chriss.Ginzburg.1997}
Neil Chriss and Victor Ginzburg, \emph{Representation theory and complex
  geometry}, Birkh\"auser Boston Inc., Boston, MA, 1997. \MR{MR1433132
  (98i:22021)}

\bibitem{Fresse.N.2016}
Lucas Fresse and Kyo Nishiyama, \emph{On the exotic {G}rassmannian and its
  nilpotent variety}, Represent. Theory \textbf{20} (2016), 451--481, [Paging
  previously given as 1--31]. \MR{3576071}

\bibitem{Fresse.N.2020}
\bysame, \emph{{A Generalization of Steinberg Theory and an Exotic Moment
  Map}}, International Mathematics Research Notices (2020), rnaa080.

\bibitem{FN.arxive2021}
Lucas Fresse and Kyo Nishiyama, \emph{On generalized {Steinberg} theory for
  type {AIII}}, 2021, \texttt{arXiv:2103.08460}.

\bibitem{2020ContempMath.FN}
Lucas {Fresse} and Kyo {Nishiyama}, \emph{{Orbit embedding for double flag
  varieties and Steinberg map}}, Contemp. Math. \textbf{768} (2021), 21--42.

\bibitem{HNOO.2013}
Xuhua He, Kyo Nishiyama, Hiroyuki Ochiai, and Yoshiki Oshima, \emph{On orbits
  in double flag varieties for symmetric pairs}, Transf. Groups \textbf{18}
  (2013), 1091--1136.

\bibitem{Iwahori.1964}
Nagayoshi Iwahori, \emph{On the structure of a {H}ecke ring of a {C}hevalley
  group over a finite field}, J. Fac. Sci. Univ. Tokyo Sect. I \textbf{10}
  (1964), 215--236 (1964). \MR{165016}

\bibitem{Knop.CMH1995}
Friedrich Knop, \emph{On the set of orbits for a {B}orel subgroup}, Comment.
  Math. Helv. \textbf{70} (1995), no.~2, 285--309. \MR{1324631}

\bibitem{Knop.1997}
\bysame, \emph{Some remarks on multiplicity free spaces}, Representation
  theories and algebraic geometry ({M}ontreal, {PQ}, 1997), NATO Adv. Sci.
  Inst. Ser. C Math. Phys. Sci., vol. 514, Kluwer Acad. Publ., Dordrecht, 1998,
  pp.~301--317. \MR{1653036 (99i:20056)}

\bibitem{Mars.Springer.1998}
J.~G.~M. Mars and T.~A. Springer, \emph{Hecke algebra representations related
  to spherical varieties}, Represent. Theory \textbf{2} (1998), 33--69.
  \MR{1600804}

\bibitem{NO.2011}
Kyo Nishiyama and Hiroyuki Ochiai, \emph{Double flag varieties for a symmetric
  pair and finiteness of orbits}, J. Lie Theory \textbf{21} (2011), no.~1,
  79--99. \MR{2797821}

\end{thebibliography}

\def\cftil#1{\ifmmode\setbox7\hbox{$\accent"5E#1$}\else
  \setbox7\hbox{\accent"5E#1}\penalty 10000\relax\fi\raise 1\ht7
  \hbox{\lower1.15ex\hbox to 1\wd7{\hss\accent"7E\hss}}\penalty 10000
  \hskip-1\wd7\penalty 10000\box7} \def\cprime{$'$} \def\cprime{$'$}
  \def\Dbar{\leavevmode\lower.6ex\hbox to 0pt{\hskip-.23ex \accent"16\hss}D}
\providecommand{\bysame}{\leavevmode\hbox to3em{\hrulefill}\thinspace}
\providecommand{\MR}{\relax\ifhmode\unskip\space\fi MR }
\providecommand{\MRhref}[2]{%
  \href{http://www.ams.org/mathscinet-getitem?mr=#1}{#2}
}
\providecommand{\href}[2]{#2}

\end{document}